\newtheorem{thm}{Theorem}[section]
\newtheorem{lem}[thm]{Lemma}
\theoremstyle{definition}
\newtheorem{rem}[thm]{Remark}
\numberwithin{equation}{section}
\newcommand{\8}{\infty}
\newcommand{\E}{\mathbb{E}}
\newcommand{\Prob}{\mathbb{P}}
\newcommand{\Pfs}[1][]{\ensuremath{\mathbb{P}_{#1}\text{-a.s.}}}
\newcommand{\N}{\mathbb{N}}
\newcommand{\R}{\mathbb{R}}
\newcommand{\A}{\mathcal{A}}
\newcommand{\F}{\mathcal{F}}
\renewcommand{\S}{\mathcal{S}}
\newcommand{\llam}{{l}}
\newcommand{\supp}{\mathrm{supp}\,  }
\renewcommand{\epsilon}{\varepsilon}
\renewcommand{\rho}{\varrho}
\newcommand{\1}[1][]{\mathbf{1}_{#1}}
\newcommand{\norm}[1]{\ensuremath{\left\| {#1} \right\|}}
\newcommand{\abs}[1]{\ensuremath{\left| {#1} \right|}}
\newcommand{\skalar}[1]{\langle #1 \rangle}
\newcommand{\eqdist}{\stackrel{\mathcal{L}}{=}}
\newcommand{\Step}[1][]{\textsc{Step #1}}
\newcommand{\Pset}[1][]{\mathcal{P}_{#1}}
\newcommand{\law}[1]{\mathcal{L}\left( #1 \right)}
\newcommand{\Rd}{\R^d}
\newcommand{\Rdnn}{{\R^d_\ge}}
\newcommand{\Cf}[2][]{\mathcal{C}^{#1}\left( #2 \right)}
\newcommand{\U}[1][]{\mathbb{U}_{#1}}
\newcommand{\condC}{(C)}
\newcommand{\Sp}{\mathbb{S}_\ge}
\newcommand{\Sd}{\mathbb{S}}
\newcommand{\Mset}{\mathcal{M}_+}
\newcommand{\interior}[1]{\mathrm{int}({#1})}
\newcommand{\est}[1][s]{r^*_{#1}}
\newcommand{\es}[1][s]{r_{#1}}
\newcommand{\Ps}[1][s]{P^{#1}}
\newcommand{\nus}[1][s]{\nu_{#1}}
\newcommand{\mM}{\mathbf{M}}
\newcommand{\mPi}{\matrix{\Pi}}
\newcommand{\mA}{\matrix{A}}
\renewcommand{\matrix}[1]{\mathbf{#1}}
\newcommand{\as}{\cdot}
\newcommand{\T}{T}
\newcommand{\closure}[1]{\overline{#1}}
\renewcommand{\P}[2][]{\ensuremath{\mathbb{P}_{#1} \left( {#2} \right)}}
\renewcommand{\mPi}{\Pi}
\renewcommand{\mA}{A}
\renewcommand{\mM}{M}
\newcommand{\ip}{\textit{(i-p)}}
\newcommand{\ipo}{\textit{(i-p,o)}}
\newcommand{\ide}{\textit{(id)}}
\newcommand{\wt}{\widetilde}
\renewcommand{\T}{{\mathbb T}}
\newcommand{\W}{{\mathbb W}}
\renewcommand{\F}{{\mathcal F}}
\newcommand{\bU}{\mathbb U}
\newcommand{\bi}{\mathbf i}
\newcommand{\bj}{\mathbf j}
\newcommand{\bo}{\mathbf 1}
\begin{document}





\title[Tail Asymptotics for Fixed Points of Smoothing Transformations]{Precise Tail Asymptotics for Attracting Fixed Points of Multivariate Smoothing Transformations - Complete Proof}

    \author{Dariusz Buraczewski$^\star$, Sebastian Mentemeier$^\dagger$}
 \address{ $^\star$Uniwersytet Wroc\l awski \\ Instytut Matematyczny \\ pl. Grunwaldzki 2/4 \\ 50-384 Wroc\l aw, Poland \\
 $^\dagger$ TU Dortmund \\ Fakult\"at f\"ur Mathematik\\ Vogelpothsweg 87 \\ 44227 Dortmund, Germany}
 \email{$^\star$ dbura@math.uni.wroc.pl, $^\dagger$ sebastian.mentemeier@tu-dortmund.de}


\begin{abstract}

 Given $d \ge 1$, let   $(A_i)_{i\ge 1}$ be a sequence of random $d\times d$ real matrices and $Q$ be a random vector in $\R^d$. We
consider fixed points of multivariate smoothing transforms, i.e. random variables $X\in \Rd$ satisfying
$$
X \text{ has the same law as } \sum_{i \ge 1} A_i X_i + Q,
$$
where $(X_i)_{i \ge 1}$ are i.i.d.~copies of $X$ and independent of $(Q, (A_i)_{i \ge 1})$.
The existence of fixed points that can attract point masses can be shown by means of contraction arguments.
%
Let $X$ be such a fixed point. Assuming that the action of the matrices is expanding as well as contracting with positive probability, it was shown in a number of papers that there is $\beta >0$  with
$\lim_{t \to \infty} t^\beta \P{\skalar{u,X}>t} ~=~ K\cdot f(u)$, where
$u$ denotes an arbitrary element of the unit sphere and $f$ a positive function and $K \ge 0$. However in many cases it was not established that $K$ is indeed positive.

In this paper, under quite general assumptions, we prove that
$$
\liminf_{t\to\8} t^{\beta} \P{\langle u,X \rangle > t}> 0,
$$
completing, in particular, the results of  \cite{Mirek2013} and \cite{BDMM2013}.%
\end{abstract}

\subjclass[2010]{Primary 60E05 (Distributions - general theory); secondary 60J80 (branching process), 60F10 (Large deviations)}

\keywords{Smoothing transform, heavy tails, Products of random matrices, branching}

\thanks{
D.~Buraczewski was partially supported by the National Science Centre, Poland (Sonata Bis, grant number DEC-2014/14/E/ST1/00588). The main part of this work was done while S.~Mentemeier held a post-doc position at the Mathematical Institute,  University of Wroc\l aw.
}
\maketitle

\section{Introduction}
\subsection{The (multivariate) smoothing transform}
Let $d \ge 1$. Let $(Q,(A_i)_{i \ge 1})$ be a random element of $\R^d \times M(d \times d, \R)^\N$, that is $Q$ is a random vector and $(A_i)_{i\ge 1}$ is a sequence of random matrices. We assume that the random number
$ N ~:= \max\{ i \, : \, A_i \neq 0\}$
is finite a.s.
If $X \in \Rd$ is a random variable such that
\begin{equation} X \text{ has the same law as } \sum_{i=1}^N A_i X_i + Q, \label{eq:SFPE} \end{equation}
where $(X_i)_{i \ge 1}$ are i.i.d.~copies of $X$ and independent of $(Q, (A_i)_{i \ge 1})$, then we call the law $\law{X}$ of $X$ a fixed point of the (multivariate, if $d >1$) {\em smoothing transform}. By a slight abuse of notation, we also call $X$ itself a fixed point.

Eq. \eqref{eq:SFPE}  has drawn a lot of attention for decades. In the univariate case this equation occurs in  various areas, e.g.
the analysis of  recursive algorithms (\cite{Roe1991,R2001,NR2004}), branching particle systems (\cite{DL1983}),
 Google's  PageRank algorithm (\cite{JO2010a,JO2010b,chen2014}).
Also the multivariate situation  draws a lot of  attention. A classical example where \eqref{eq:SFPE} appears is  the joint distribution of
key comparisons and key exchanges for Quicksort  (\cite{NR2004}). However in this case the action of the matrices is purely contracting, and therefore all fixed points have exponential moments, which is not in the scope of the present paper. Some recent examples are related to kinetic models, see \cite{Bassetti2014}. Then solutions to \eqref{eq:SFPE} describe e.g. equilibrium distribution of the particle velocity in Maxwell gas.

The aim of this paper is to describe the tail behavior of fixed points, i.e. the decay rate of $$\P{\abs{X}>t}\qquad \mbox{or} \qquad \P{\skalar{u,X}>t},$$
 as t goes to infinity, where
$u$ denotes an arbitrary element of the unit sphere $\Sd$. Our focus is on the multivariate case, where we resolve the open question, whether the limit $\lim_{t \to \infty} \P{\abs{X}>t}$ is indeed positive.

\subsection{Univariate smoothing transform}
In dimension $d=1$, complete results about the structure of fixed points are available under very weak assumptions, see \cite{DL1983,Liu1998,Biggins1997,ABM2012,Alsmeyer2013, BK2014} for the case of $A_i \ge 0$, and \cite{Iksanov2014} for the most general case of $A_i \in \R$. It turns out, that the characterization depends on the function
\begin{equation} \label{eq:ms} m(s):= \E \sum_{i=1}^N \abs{A_i}^s, \end{equation}
which is log-convex, and in particular on the value $\alpha = \inf\{ s > 0 \, : \, m(s) = 1\}$.
It is shown that there are two classes of fixed points: Fixed points are either mixtures of $\alpha$-stable laws and attract (only) laws with $\alpha$-regular varying tails, or have a finite moment of order $\alpha + \epsilon$ for some $\epsilon >0$ (subject to the assumption $\E \abs{Q}^{\alpha+\epsilon} < \infty$) and attract point masses.The ``relevant'' solutions in most situations are those from the second class, which we call {\em attracting fixed points} in the sequel, and it is therefore important to investigate their properties, such as tail behavior.
Under various assumptions on $(Q, (A_i)_{i \ge 1})$ and $N$, it has been shown in \cite{Gui1990,Liu2001,JO2010a,JO2010b} that if, roughly speaking, there is $\beta>\alpha$ with $m(\beta)=1$ and $\E \abs{Q}^\beta < \infty$ (including the case $Q \equiv 0$), then
\begin{equation}
\label{eq: limit}
\lim_{t \to \infty} t^\beta \P{\abs{X}>t} ~=~ K \ge 0 .
\end{equation}
There is also a rich literature concerning the case when $\alpha$ is the unique point such that $m(\alpha)=1$, that is when $m'(\alpha)=0$ (see \cite{DL1983, Liu1998, BK2005, B2009.SPA, BK2014}).

It is obviously a very important question, whether $K$ is indeed positive, since otherwise, $t^\beta$ might be not the precise rate.
For $A_i \ge 0$ and $Q=0$, positivity of $K$ is proved in \cite{Gui1990,Liu2001}, but it remained - except for some special cases - an open question in \cite{JO2010a,JO2010b}, where the cases $Q \neq 0$ resp. $A_i \in \R$ were considered. This question was  answered  in the consecutive papers \cite{Alsmeyer2013a} by using complex function arguments, \cite{Jelenkovic2014} (only for the inhomogeneous case) and in \cite{BDZ2014+} by using large deviation estimates (but for i.i.d. $(A_i)$ only).

\subsection{What is this paper about?}
In the multivariate setting $d>1$, an analogue of the function $m$ can be defined (details given below), and in the case where the $A_i$, $Q$ and $X$ all have nonnegative entries, it has been shown in \cite{Mentemeier2013}, that again fixed points are either mixtures of multivariate $\alpha$-stable laws (with $\alpha$ defined as before), or have a finite moment of order $\alpha + \epsilon$, if $\E \abs{Q}^{\alpha +\epsilon} < \infty$.

Tail behavior of attracting fixed points in this setting has been analyzed for the case $\alpha=1$ and $Q=0$ in \cite{BDGM2014}, where it has been shown that
\begin{equation}\label{eq:lim} \lim_{t \to \infty} t^\beta \P{\skalar{u,X}>t} ~=~ K r(u)  \end{equation} with a positive continuous function $r$ on $\Sp := \Sd \cap [0,\infty)^d$ for $\beta$ being the unique value such that $\beta >\alpha$ and $m(\beta)=1$. In this case, also positivity of $K$ has been proved.

The inhomogeneous case $Q \neq 0$, with $\alpha \le 1/2$, has been studied in \cite{Mirek2013} and the existence of the limit in Eq. \eqref{eq:lim} is proved there, but it remained an open question, whether $K$ is positive (at least for $\beta <1$).

The case of invertible matrices $(A_i)_{i \ge 1}$ was studied in \cite{Bassetti2014} and \cite{BDMM2013}, with tail behavior being studied mainly in the latter paper. There once more existence of the limit in Eq. \eqref{eq:lim} was proved, but not the positivity of $K$.

\medskip

The contribution of this paper is to prove the positivity of $K$ in all these cases.
We will follow the strategy developed in \cite{BDZ2014+}, getting rid at the same time  of some of its restrictions. We use this approach since we were unable to find an extension of the method used in \cite{Alsmeyer2013a} to the multivariate setting, and since the proof in \cite{Jelenkovic2014} relies on the assumption $\P{|Q|>0}>0$.

The main technical ingredient is a large deviation result for products of random matrices, which was developed in \cite{BM2013} and provided for several classes of random matrices.
In the next section we introduce the three classes of matrices which are considered in this paper and further notation relevant for the multivariate case. The main results of this paper are formulated in Section \ref{sec:main results}. The remaining sections are devoted to the proof.

\section{Notations}
\label{sec: notation}
In this section, we describe, in an abbreviated form, but similar to \cite{BM2013} three sets of assumptions for random matrices, namely condition $\condC$ for nonnegative matrices and conditions $\ip$~and $\ide$~for invertible matrices. Each set of assumptions guarantees precise large deviation estimates extending the Furstenberg-Kesten-theorem (\cite{Furstenberg1960}), i.e. the SLLN for the norm of products of random matrices. These large deviation estimates will play a prominent role in our proof below.
\newcommand{\mm}{\matrix{m}}
Let $d \ge 1$. Given a probability law $\mu$ on the set of $d \times d$-matrices $M(d \times d, \R)$, let $(M_n)_{n \in \N}$ be a sequence of i.i.d.~random matrices with law $\mu$. 
Equip $\Rd$ with any norm $\abs{\cdot}$, write $\norm{\mm}:=\sup_{x \in \Sd} \abs{\mm x}$ for the operator norm of a matrix $\mM$  and denote the unit sphere in $\Rd$ by $\Sd$. We write
$$ \mm \as x := \frac{\mm x}{\abs{\mm x}}, \qquad x \in \Sd$$ for the action of a matrix $\mm$ on $\Sd$ (as soon as this is well defined). If $\Sd$ is invariant under the action of $M$, we introduce a \emph{Markov random walk} $(U_n, S_n)_{n \in \N}$ on $\Sd \times \R$ by
\begin{equation} \label{unsn} U_n := \mM_n \cdots \mM_1 \as U_0, \qquad S_n:= \log \abs{\mM_n \cdots \mM_1 U_0} = \log \abs{\mM_n U_{n-1}} + S_{n-1}, \end{equation}
for some initial data $U_0 \in \Sd$, the value of which we note by the convention $\P[u]{U_0=u}=1$, $u \in \Sd$.

Below, the following concepts will appear several times: Write $\Gamma:= [\supp \mu]$ for the semigroup of matrices, generated by the support of $\mu$. A matrix $\mm$ with an algebraic simple dominant eigenvalue $\lambda_\mm$, that exceeds all other eigenvalues in absolute value, will be called {\em proximal}, and we will denote by $v_\mm^{\pm} \in \Sd$ the corresponding normalized eigenvectors ($v_{\mm}^+=-v_\mm^-$), using the convention that $\min\{i : (v_{\mm}^+)_i > 0 \} <  \min\{i : (v_{\mm}^-)_i > 0 \}$.
Note that a matrix with all entries positive is proximal by the Perron-Frobenius theorem, and that $v_{\mm}^+$ is the Perron-Frobenius eigenvector.

\subsection{Invertible Matrices: Condition \ip}
The condition \ip~(irreducible and proximal), described below, is due to Guivarc'h, Le Page and Raugi and was studied in detail in several articles by these authors, the most comprehensive one of which is \cite{Guivarch2012}.

Let now $\mu$ be a probability measure on the group $GL(d,\R)$ of invertible $d \times d$ matrices.
Then the measure $\mu$ is said to satisfy condition \ip, if
\begin{enumerate}
\item There is no finite union $\mathcal{W}=\bigcup_{i=1}^n W_i$ of subspaces $0 \neq W_i \subsetneq \R^d$ which is $\Gamma$-invariant, i.e. $\Gamma \mathcal{W}=\mathcal{W}$. (\emph{strong irreducibility})
\item $\Gamma$ contains a proximal matrix. (\emph{proximality})
\end{enumerate}

It may happen that there is a $\Gamma$-invariant proper closed convex  cone $C$. This situation is very similar to the case of nonnegative matrices, see \cite{BDGM2014}. Therefore, we will exclude it and only consider matrices satisfying
\begin{equation}
\label{ipo} \tag{\textit{i-p,o}} \mu \text{ satisfies \ip, and there is no $\Gamma$-invariant proper closed convex  cone.}
\end{equation}

In this case, it can be shown that the Markov chain $(U_n)$ has a unique invariant probability measure, which is supported on
$$ V(\Gamma) := \closure{\left\{v_\mm^{\pm} \in \Sd \, : \, \mm \in \Gamma \text{ is proximal }\right\}}, $$
and due to the strong irreducibility, the orthogonal space of $V(\Gamma)$ is $\{0\}$.
%
Finally, write $$ \iota(\mm)~:=~ \inf_{x \in \Sd} \abs{\mm x} ~=~ \norm{\mm^{-1}}^{-1}.$$
%

\subsection{Nonnegative Matrices: Condition $\condC$}
Next, we introduce a condition on nonnegative matrices, i.e. all entries greater or equal to zero, which do not need to be invertible. We will use similar notation as for condition \ip, in order to highlight connections. Note that these assumptions can be formulated more generally for matrices leaving invariant a proper closed convex cone, see \cite{BDGM2014}.

 Denote the cone of vectors with nonnegative entries by $\Rdnn$
 and write
$ \Sp = \{ x \in \Rdnn \, : \, \abs{x}=1\}$
for its intersection with the unit sphere.
It is invariant under the action of \emph{allowable} matrices, i.e. matrices having nonnegative entries and no zero row nor column. For an allowable matrix, the quantity
$$ \iota(\mm) := \min_{x \in \Sp} \abs{\mm x} $$
is strictly positive and is the suitable substitute for $\iota$ as defined for invertible matrices.

We say that a  probability measure $\mu$ on nonnegative matrices satisfies condition $\condC$, if:
\begin{enumerate}
\item Every $\mm \in \supp \mu$ is allowable.
\item $[\supp \mu]$ contains a matrix all entries of which are strictly positive (a {\em positive matrix}).
\end{enumerate}

Once again, this guarantees the existence of a unique invariant probability measure for $(U_n)$ on $\Sp$, which is supported in
$$ V(\Gamma) := \closure{\left\{v_\mm \, : \, \mm \in \Gamma \text{ is a positive matrix }\right\}}. $$

Note that for nonnegative matrices, being positive is a stronger assumption than proximality, for it also asserts irreducibility: A diagonal matrix might be allowable and proximal as well, but in contrast to a positive matrix, its dominant eigenvector is not attractive on the whole set $\Sp$. This is why no assumption on invariant subspaces is needed here.
Instead, we have to impose an additional non-lattice condition for $(S_n)$, which is automatically satisfied under \ip:
%
Define $$ S(\Gamma) := \{\log \lambda_\mm \ : \ \mm \in \Gamma \cap \interior{\Mset} \}.$$
Then we say that $\mu$ is {\em non-arithmetic}, if the (additive) subgroup of $\R$ generated by $S(\Gamma)$ is dense.

\subsection{Invertible Matrices: Condition \ide}
The third set of assumptions, called \ide~for irreducible and density, appears first at the end of \cite{Kesten1973} and was elaborated  in \cite{AM2010}. In fact, it can be shown to imply condition \ipo. Due to the stronger assumption that $\mu$ is absolutely continuous, it often allows for simpler proofs, this is why we include it as an extra set of assumptions.

A probability measure $\mu$ on $GL(d,\R)$ 
is said to satisfy condition $\ide$ if
\begin{enumerate}
\item for all open $B \subset \Sd$ and all $x \in \Sd$, there is $n \in \N$ such that $\P{\mPi_n \as x \in B} >0$, and 
\item there are a matrix $\mm_0 \in GL(d,\R)$, $\delta, c >0$ and $n_0 \in \N$ such that
$$ 
 \P{\mPi_{n_0} \in d\mm } ~\ge~ c \1[{B_\delta(\mm_0)}](\mm) \, \llam(d \mm) ,$$
where $\llam$ denotes the Lebesgue measure on $\R^{d^2} \simeq M(d \times d,\R)$. 
\end{enumerate}
The classical example is $\mu$ having a density about the identity matrix.

It is shown in \cite[Lemma 5.5]{AM2010} that $U_n$ is a  Doeblin chain under condition \ide. The support of its  stationary probability measure is $\Sd$ by \cite[Proposition 4.3]{AM2010}, therefore  in the case of \ide\ we  have  $V(\Gamma)=\Sd$.

\subsection{Markov random walk and change of measure}
Below, we identify $\S=\Sp$ in the case of nonnegative matrices  and $\S=\Sd$ in the case of \ip- or \ide-matrices. Given a measure $\mu$ on matrices as before, set
$$ I_\mu := \{ s \ge 0 \, : \, \E \norm{M}^s  <\infty \}.$$
Then, for $s \in I_\mu$, we define  operators  in the set $\Cf{\S}$ of continuous functions on $\S$ by
\begin{equation}\label{eq:Ps}  \Ps f(x) := \E \big[ \abs{M x}^s \, f(M \as x)\big] 
\end{equation}

It was proved in \cite{Kesten1973,BDGM2014} for nonnegative matrices, in \cite{Guivarch2012} for invertible matrices under condition \ipo~ and in \cite{Mentemeier2013a} under condition \ide, that the spectral radii of these operators are given by the log-convex  and differentiable function 
\begin{equation}
\label{eq: ks}
k(s) := \lim_{n \to \infty} \left(\E{\norm{\mM_n \ldots \mM_1 }^s} \right)^\frac{1}{n}
\end{equation}
and that for each $s \in I_\mu$  there are
\begin{itemize}
\item an unique normalized  function $\es \in \Cf{\S}$;
  \item an unique probability measure $\nus \in \Pset(\S)$ satisfying
\begin{equation}\label{eigenfunction} \Ps \es = k(s) \es \quad \text{ and } \quad \Ps \nus = k(s) \nus. 
\end{equation}
\end{itemize}
Moreover, the function $\es$ is strictly positive and $\bar{s}:=\min\{s,1\}$-H\"older continuous. The support of the measure is given by $\supp\, \nus = V(\Gamma)$.
Equation \eqref{eigenfunction} yields that if $k(\gamma)=1$, then $h(u,t):=e^{\gamma t} \est[\gamma](u)$ is an harmonic function for the Markov chain $(U_n, S_n)$. Using the idea of Doob's $h$-transform, one can introduce  new probability measures $\Prob_x^\gamma$, and it turns out that under $\Prob_u^\gamma$, $S_n$ has drift $k'(\gamma)$, i.e.
\begin{equation}
\label{SLLN} \lim_{n \to \infty} \frac{S_n}n ~=~ \frac{k'(\gamma)}{k(\gamma)} \quad \Prob_x^\gamma\text{-a.s.}
\end{equation}
This idea can be extended (see \cite[Section 2]{BM2013} for details) to yield {\em exponentially shifted} probability measures $\Prob_x^\gamma$ for all $\gamma \in I_{\mu}$, such that the property \eqref{SLLN} holds.

We will make use of the following estimate: We obtain from \cite[Corollary 4.6]{BDGM2014} for Condition \condC, \cite[Lemma 2.8]{Guivarch2012} for condition \ipo~ (the proof working for \ide~ as well) that for all $s \in I_\mu$ there is a constant $c_s$, independent of $n$, such that
\begin{equation} \E[\norm{\Pi_n}^s] ~\le~ c_s k(s)^n. \label{eq:kEPi}\end{equation}

\subsection{Case distinction concerning $N$}

We will make the following case distinction concerning the number $N$:
\begin{enumerate}
\item[(N-random)] $N \in \N$ is random with $1 < \E N < \infty$, and conditioned upon $N$, $(\mA_i)_{i= 1}^N$ are i.i.d.~ with law $\mu$, and the variables $Q$ and $(N, (\mA_i)_{i\ge 1})$ are independent. 
\item[(N-fixed)] $N \ge 2$ is fixed, $(\mA_1, \dots, \mA_N, Q)$ having any dependence structure, and $ \bigcup_{i=1}^N \supp A_i$ is bounded
\end{enumerate}
The case (N-fixed), without any loss of generality, can be reduced to the situation where all the random variables $\mA_1,\ldots,\mA_N$ are identically distributed. This is achieved by replacing $A_i$ with $A_{\tau(i)}$, where $\tau$ is a random permutation, independent of $(A_i)_{i=1}^N$, distributed uniformly on the symmetry group of $\{1, \dots, N\}$, see \cite[Proposition A.1]{BDMM2013} for more details.
The same argument can be used to supply the even stronger property that $A_1, \dots, A_N$ are exchangeable, i.e., for any (vector valued) function $f$ on $M(d\times d,\R)^N$ and any permutation $\sigma$ of $\{1,\ldots, N\}$ it holds that
\begin{equation}
f(A_1,\ldots, A_N) \eqdist f(A_{\sigma(1)},\ldots, A_{\sigma(N)}).
\end{equation}
This property also follows immediately (for any permutation of a finite index set), if $(A_i)_{i \ge 1}$ are i.i.d.
Hence for both cases, (N-random) and (N-fixed), we can now introduce the following standing assumption:
\begin{equation} \tag{StA} \text{ $(A_i)_{i=1}^N$ are identically distributed and exchangeable.} \label{eq:perm} \end{equation}
We then set
$$ \mu ~:=~\law{\mA_1^* \in \cdot} ,$$ i.e. $\mu$ is the law of the transpose of $A_1$.
Then the general, multivariate version of the function $m(s)$ (see Eq. \eqref{eq:ms}) is given by 
$$ m(s) := (\E N) k(s),$$
with $k(s)$ as defined in \eqref{eq: ks} (with $M_1, M_2, \dots$ being i.i.d.~ random variables, having law $\mu$).

\section{Statement of Results}
\label{sec:main results}

Here is our main result in the multidimensional situation:
\begin{thm}\label{thm:d>1}
Assumptions:
\begin{enumerate}
\item Let {\em either} (N-random) {\em or} (N-fixed) be satisfied.
\item Geometrical assumptions: 
Assume {\em one} of the following 
\begin{enumerate}[(Ga)]
\item $\mA_i$ and $Q$ are nonnegative, $\mu$ satisfies $\condC$ and is nonarithmetic,  or
\item $\mA_i$ are invertible and satisfy \ipo~or \ide.
\end{enumerate}
\item Moment assumptions: Assume {\em all} of the following
\begin{enumerate}[(M1)]
\item There are $0 < \alpha < \beta$ and $\epsilon >0$ such that
$$ m(\alpha)=m(\beta)=1, \qquad \E \abs{Q}^{\beta+\epsilon} < \infty, \qquad { \E \norm{A_1^*}^{\beta+\epsilon} \iota(A_1^*)^{-\epsilon}< \infty,}
$$
\item there is a nondegenerate random variable $X$ satisfying \eqref{eq:SFPE} with $\E \abs{X}^s<\infty$ for all $s < \beta$.
\end{enumerate}
\end{enumerate}
Then for this $X$   and all $u \in \S$,
$$ \liminf_{t \to \infty} t^\beta \P{\skalar{u,X}>t} > 0.$$
\end{thm}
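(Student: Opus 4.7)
The approach follows \cite{BDZ2014+}: iterate the SFPE \eqref{eq:SFPE} $n$ times to represent $X$ as a sum over a branching tree, then use the large deviation estimates of \cite{BM2013} together with a Paley--Zygmund argument to show that with probability at least $c\,t^{-\beta}$ a single vertex in the $n$-th generation produces an anomalously large positive contribution to $\langle u,X\rangle$ that is not cancelled by the remaining terms.

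\emph{Branching representation and change of measure.} Iterating \eqref{eq:SFPE} yields
\[
X \eqdist \sum_{|v|=n} L(v)\,X^{(v)} + \sum_{|v|<n} L(v)\,Q^{(v)},
\]
where $v$ ranges over a tree (Galton--Watson under (N-random), $N$-ary under (N-fixed)), $L(v)$ is the product of matrices along the root-to-$v$ path, and $\{X^{(v)}\}, \{Q^{(v)}\}$ are i.i.d.\ copies of $X$ and $Q$. By (StA) and the many-to-one lemma, expectations over $\{|v|=n\}$ reduce to expectations for a single product $\Pi_n = A_n^*\cdots A_1^*$ weighted by $(\E N)^n$. Applying Doob's $h$-transform with $h(u,t)=e^{\beta t}\est[\beta](u)$ produces the $\beta$-shifted law $\Prob^\beta_u$, under which the Markov random walk $(U_n,S_n)$ has positive drift $k'(\beta)/k(\beta)$, and the large deviation estimates of \cite{BM2013} give, for $n\approx k(\beta)\log t/k'(\beta)$ and any Borel $B\subset\S$ with $\nust[\beta](B)>0$,
\[
\Prob\!\left(\Pi_n u/|\Pi_n u|\in B,\ |\Pi_n u|>t\right) \;\gtrsim\; k(\beta)^n\, t^{-\beta}.
\]

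\emph{First moment and Paley--Zygmund.} Writing $\langle u, L(v) X^{(v)}\rangle = |L(v)^*u|\cdot\langle U^v_n, X^{(v)}\rangle$ with $U^v_n=L(v)^*u/|L(v)^*u|$, and using the non-degeneracy of $X$ together with $\supp\nust[\beta]=V(\Gamma)$, one shows that there exist $\delta,c_0>0$ and a Borel set $B\subset V(\Gamma)$ with $\nust[\beta](B)>0$ such that $\P{\langle w,X\rangle>\delta}\ge c_0$ for all $w\in B$. Multiplying the previous display by $(\E N)^n$ and invoking $m(\beta)=(\E N)k(\beta)=1$, the expected number $\E Z(t)$ of generation-$n$ vertices with $\langle u, L(v)X^{(v)}\rangle>t$ is at least $c_1 t^{-\beta}$. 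The matching second-moment bound on $\E Z(t)^2$ is obtained by splitting a pair of distinct vertices at their common ancestor: the diagonal term is handled by truncating at level $t$ and using $\E|X|^s<\infty$ for $s<\beta$, while the off-diagonal terms sum to a convergent geometric series because $m(s)<1$ for $s\in(\alpha,\beta)$ by log-convexity combined with $m(\alpha)=m(\beta)=1$. Paley--Zygmund then yields $\P{Z(t)\ge 1}\gtrsim t^{-\beta}$.

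\emph{Main obstacle.} The remaining and hardest step is the one-big-jump/non-cancellation argument: on $\{Z(t)\ge 1\}$, the distinguished contribution must not be wiped out by the rest of the branching sum. The SLLN, moments of $X$ of order $s\in(\alpha,\beta)$, and $\E|Q|^{\beta+\varepsilon}<\infty$ together ensure the remainder is $o(t)$ with high probability, while (StA) is used to redistribute the anomalous vertex and the continuous eigenfunctions $\est[\beta]$ encode the direction dependence. The invertible case (Gb) is the most delicate because signs may cancel; it is precisely the joint integrability $\E\norm{A_1^*}^{\beta+\varepsilon}\iota(A_1^*)^{-\varepsilon}<\infty$ in (M1) that allows the relevant $\beta$-shift estimates of \cite{BM2013} to be invoked, and I expect this step to be the main technical hurdle in transplanting the scalar i.i.d.\ argument of \cite{BDZ2014+} to the multivariate exchangeable setting.
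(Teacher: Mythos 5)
Your high-level strategy (branching representation, the large deviation estimates from \cite{BM2013}, a $\beta$-shift, a first/second moment argument over pairs split at their last common ancestor, and a one-big-jump lower bound that exploits the unbounded support of $X$) is the same strategy as the paper's. But there are two genuine gaps relative to what the paper actually does.

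First, you invoke the many-to-one (spinal) lemma to reduce generation-level sums to a single $\beta$-shifted random walk. The paper explicitly remarks that it tried and failed to make this compatible with its argument: the proof requires summing over a specially chosen sparse subtree $\W$ (only every $C_1$-th generation, and among those only vertices whose last $C_1$ coordinates are all $1$), and the many-to-one identity does not respect this restriction. This sparse subtree $\W$ is not an optional device; it is what makes the Bonferroni / second-moment estimate close. In your Paley--Zygmund formulation, taking $Z(t)$ to be the number of \emph{all} level-$n$ vertices with a large contribution, the off-diagonal sum in $\E Z(t)^2$ does not have the required geometric decay directly from ``$m(s)<1$ for $s\in(\alpha,\beta)$''; the paper instead obtains the decay $e^{-\alpha\delta(p-m)}$ from the event constraint (built into $V_{\bi,t}$) that all proper ancestor products of $\bi$ are \emph{small}, combined with Markov's inequality at exponent $\alpha$ (crucially using $k(\alpha)=k(\beta)=(\E N)^{-1}$), and then needs the $C_1$-separation of $\W$ to ensure $p-m\ge C_1$ on every pair, so that the final constant can be made positive by tuning $C_1$. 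Without this sparsification your off-diagonal bound is heuristic, and the Paley--Zygmund denominator is not controlled.

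Second, the step you flag as the ``main obstacle'' --- showing that the distinguished contribution $\langle u, L(v)X^{(v)}\rangle$ is not cancelled by the remainder --- is left unresolved in your sketch, but it is precisely where the paper's work is. The paper proves a separate cone lemma: there are finitely many disjoint sets $\Omega_j\subset\S$ with $\P{X\in\Omega_j,\ |X|>D/\epsilon_j}\ge\kappa>0$ and dual cones $\Omega_j^*$ with $\bigcup_j\Omega_j^*\supset\R^d$ (resp.\ $\R^d_\ge$), so that on the event $\{\Pi_\bi^*u\in\Omega_j^*\}\cap\{X_\bi\in\Omega_j\}$ one has the non-cancellation inequality $\skalar{\Pi_\bi^*u,X_\bi}\ge\epsilon_j|\Pi_\bi^*u|\,|X_\bi|$. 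The event $V_{\bi,t}$ then combines $|\Pi_\bi^*u|\ge t$ with the requirement that every proper ancestor product, multiplied by $|Z_{\bi|_{k+1}}|\vee 1$, is bounded by $e^{-(|\bi|-k)\delta}C_0 t$, which uniformly dominates the remainder term in the almost-sure identity \eqref{n4.4}. Your invocation of ``$\supp\nust[\beta]=V(\Gamma)$'' and of the continuity of $\est[\beta]$ does not by itself produce this quantitative cone estimate; the missing ingredient is the reduction to a two-factor equation $X\eqdist A_1X_1+B$ so that \cite[Lemma 10.2]{BM2013} applies, with $s\in(\alpha,\beta)$ chosen so that $k'(s)>0$. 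Filling these two gaps is, in essence, the content of Sections 5 and 6 of the paper.
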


As a corollary of this results we obtain that the asymptotic behavior proved in \cite[Theorems 2.7, 2.9 and 2.11]{BDMM2013} and \cite[Theorem 1.9]{Mirek2013} is exact. Tail estimates for the case of random $N$ in the multivariate situation have not yet been considered in the literature and this is the first result in that direction.

\subsection{Structure of the paper}
We proceed in Section \ref{sect:wbp} by introducing the weighted branching process, which allows for the study of the fixed point equation \eqref{eq:SFPE} by iteration and for the  construction of random variables, which satisfy the equation a.s.~(in contrast to in law).
Using that the support of these random variables is unbounded, we can estimate $\P{\abs{X}>t}$ from below by a union of events of the type ``one large term occurs'', this is made precise in Section \ref{sect:estimate}, with the fundamental estimate being proved in Lemma \ref{lem:estimate}. Section \ref{sec: combinatorics} is mainly combinatorical, there we count the number of events occuring in the union, and estimate from above the probability of intersections, which we make small by an appropriate choice of parameters and thereby complete the proof of the main theorem in Section \ref{subsect:proofmthm}. An outline of the proof is given in Subsection \ref{sect:outline}.

\begin{rem}
We have tried hard, but were not able to avoid the case distinctions concerning $N$. A natural way to do this would be the use of a spinal-tree-identity (many-to-one lemma), but it seems that our approach is not compatible with this technique. The main difficulty is that we consider sums over particular subtrees (as defined in \eqref{www}), which we were not able to reformulate in such a way that a many-to-one-lemma would be applicable.
\end{rem}

\section{Weighted branching process}\label{sect:wbp}

In this section, we introduce the weighted branching process, i.e.~a sequence of random variables which satisfy Eq. \eqref{eq:SFPE} almost surely.

\subsection{Trees}
Let $\N=\{1,2,\ldots\}$ be the set of positive integers and let
$$
\bU = \bigcup_{k=0}^\8 \N^k
$$
be the set of all finite sequences $\bi = i_1\ldots i_n$. By $\emptyset$ we denote the empty sequence. For $\bi = i_1\ldots i_n$ we denote by $|\bi|$ its length and by $\bi|_k = i_1\ldots i_k$ the curtailment of $\bi$ up to first $k$ terms. Given $\bi\in\bU$ and $j\in\N$ we define $\bi j = i_1\ldots i_n j$ the sequence obtained by juxtaposition. In the same way we define $\bi\bj$ for $\bi, \bj\in \bU$.

We introduce a partial ordering on $\bU$, writing $\bi \le \bj$ when there exists $\bi_1\in \bU$ such that $\bj = \bi\bi_1$. If $\bi,\bi'\in\bU$, we write
$\bj = \bi\wedge \bi'$ for the maximal common sequence of $\bi$ and $\bi'$, that is, $\bj$ is the longest sequence such that $\bj\le \bi$ and $\bj \le \bi'$.

We say that a subset $\T$ of $\bU$ is a tree if
\begin{itemize}
\item $\emptyset \in \T$;
\item if $\bi\in\T$, then $\bi|_k\in\T$ for any $k< |\bi|$;
\item if $\bi\in \T$ and $j\in \N_+$ then $\bi j\in T$ if and only if $1\le j \le N_{\bi}$, for some integer $N_{\bi}\ge 0$.
\end{itemize}
Then $\emptyset$ is the root of the tree.

{ In case (N-random),  let $(N_{\bi})_{\bi \in \bU}$ be a family of i.i.d.~copies of $N$, which thus determines the shape of the tree $\T$. By $\F_\T$ we will denote below the $\sigma$-algebra generated by $(N_{\bi})_{\bi \in \bU}$.  In case (N-fixed), the shape of the tree is deterministic, then $\T=\bigcup_{k=0}^\8 \{1,\dots,N\}^k$.}

\subsection{Random variables indexed by $\U$}
%

To each node $\bj \in \U$ we attach an independent copy $\A_\bj:=(Q_\bj, (A_{\bj i})_{i \ge 1})$ of $\A:=(Q, (A_i)_{i \ge 1})$ and, given a random variable $X \in \Rd$, satisfying \eqref{eq:SFPE}, an independent copy $X_\bj$ of $X$ as well. We identify $(Q_{\emptyset}, (A_{\emptyset i})_{i \ge 1}) = (Q, (A_i)_{i \ge 1})$. We refer to $A_{\bj i}$ as the weight pertaining to the edge connecting $\bj$. Denote the total weight on the unique path connecting the edge $\bj$ with the edge $\bj \bi$ by
$$  \Pi_{\bj, \bj \bi} ~:=~ A_{\bj i_1} A_{\bj i_1 i_2} \cdots A_{\bj \bi}, \qquad \Pi_\bj := \Pi_{\emptyset, \bj}$$
and define the empty product to be the $d \times d$ identity matrix.
 Due to the assumption $N < \infty$ $\Pfs$, each generation of $\T$ has a.s.~a finite size. Notice also that in view of \eqref{eq:perm}
 the law of $\Pi_{\bj,\bj\bi}$ depends only on the numbers of factors and coincides with the law of $\Pi_{\bi}$.
 \medskip

Recall that we defined $\mu$ to be the law of $\mA_1^*$ and $\mM_1, \mM_2, \dots$ to be a sequence of i.i.d. random variables with law $\mu$. Then $\Pi_n^* := \mM_n \cdots \mM_1$ has the same law as $\Pi_\bi^*$ for every $\bi \in \T$ with $\abs{\bi}=n$ and moreover,
$$ \P{\Pi_\bi^* \as u \in \cdot} = \P[u]{U_n \in \cdot}, \qquad \P{\log \abs{\Pi_\bi^* u} \in \cdot} = \P[u]{S_n \in \cdot}$$ (for the definition of $(U_n, S_n)_{n \ge 0}$ see \eqref{unsn}).
\medskip

We write $[\T]_{\bj}~:=~\{ \bi \in \bU \, : \, \bj \bi \in \T \}$ for the subtree of $\T$ rooted at $\bj$, and define in general the shift operator acting on functions of the family $(\A_\bi, X_\bi)_{\bi \in \U}$ by
$$ \left[F\big((\A_\bi, X_\bi)_{\bi \in \U}\big) \right]_\bj ~:=~ F\big((\A_{\bj \bi}, X_{\bj \bi})_{\bi \in \U}\big).$$
With this notation, $\Pi_{\bj, \bj \bi} = [\Pi_\bi]_\bj.$ The random variables \begin{equation}\label{eq:WBP}
\begin{split}
Y_l &:=~ \sum_{\abs{\bi} < l} \Pi_{\bi} Q_{\bi} + \sum_{\abs{\bi}=l} \Pi_{\bi} X_{\bi},\qquad l\ge 1,\\
Y_0 &:= X_{\emptyset}.
\end{split}
\end{equation}
are called {\em the weighted branching process associated with $(Q, (A_i)_{i \ge 1})$ and $X$}. They  satisfy
$$ Y_l ~=~ \sum_{i = 1 }^N A_i [Y_{l-1}]_i + Q,$$
where $[Y_{l-1}]_i$ are i.i.d., with the same law as $Y_{l-1}$.
Since  $X_{\bi}$ are solutions to \eqref{eq:SFPE}, then in particular, $Y_l \eqdist X$ for all $l \in \N$.
%

We define moreover
\begin{eqnarray} \label{n4.2}
Z_{l,\bi k} &:=& \sum_{j\not=k, j\le N_{\bi}} A_{\bi j} [Y_{l-\abs{\bi}-1}]_{\bi j} + Q_{\bi}, \qquad l>|\bi|.
\end{eqnarray}
Then for $l=|\bi|+1$ we have
\begin{eqnarray} \label{n4.3}
Z_{l,\bi k} &:=& \sum_{j\not=k, j\le N_{\bi}} A_{\bi j} X_{\bi j} + Q_{\bi}.
\end{eqnarray}
By \eqref{eq:perm} the random variables $(Z_{l, \bi k})_{1 \le k \le N_{\bi}}$ are obviously identically distributed. To simplify our notation we define
$$
Z_{|\bi|} := Z_{|\bi|,\bi}
$$


For every $l \in \N$ and $\bi \in \T$ with $\abs{\bi} \le l$, we can rearrange the sum in Eq. \eqref{eq:WBP} to obtain the a.s. identity.
\begin{equation}
\label{n4.4}
Y_l ~=~ \Pi_{\bi} [Y_{l-|\bi|}]_{\bi} + \sum_{k \le \abs{\bi}} \Pi_{\bi|_{k-1}}Z_{l,\bi|_{k}}.
\end{equation}
Observe that this implies for $|\bi|\le l$ the following identity in law.
\begin{equation}
\label{n4.4'}
Y_l ~\eqdist~ \Pi_{\bi} X_{\bi} + \sum_{k \le \abs{\bi}} \Pi_{\bi|_{k-1}}Z_{l,\bi|_{k}}.
\end{equation}

\subsection{Outline of the proof}\label{sect:outline}
This identity may give a first idea, how we are going to proceed in the proof of the main theorems: We consider sets where $\Pi_{\bi}X_\bi$ is large, while the remaining sum is small. Therefore, we in turn study sets where $\norm{\Pi_\bi}$ is large, but smaller products are comparably small (with the comparison governed by a parameter $C_0$). The probability of such sets will be estimated using large deviation results for products of random matrices, obtained in \cite{BM2013}. Then the probability that $X$ is large will be estimated from below by the union of sets as described above, over different $\bi$. It will be convenient to not take the union over $\bi$ from the whole tree, but rather from a sparse subtree, in order to make the events sufficiently disjoint. The relative size of the subtree will be given by a parameter $C_1$, which will be a free parameter of the proof.

\medskip
A particular problem in the multivariate situation is to compare $\Pi_\bi X_\bi$ with $\norm{\Pi_\bi}$. We deal with this question at the beginning of the next section, the better part of which is devoted to formulate precisely the heuristics we described above.

\section{First estimates}\label{sect:estimate}
We start this section by a lemma stating that $X$ has unbounded support in ``all'' directions of $\Rd$ resp. $\Rdnn$, which we will make use of subsequently in Lemma \ref{lem:estimate}, which gives the fundamental comparison between $\P{\abs{X}>t}$ and the union of large deviation events.

\begin{lem}\label{lem:cones}
Assume that the hypotheses of Theorem \ref{thm:d>1} are satisfied and that $X$ is not a.s.~constant.
 Then for all $D >0$ there is $J < \infty$ and $\epsilon_j>0$, $1 \le j \le J$, and a $\kappa>0$,  such that there are disjoint subsets $\Omega_j$ of $\Rd$ 
with
$$\P{ X\in \Omega_j \ \text{ and } |X|> \frac{D}{\epsilon_j}} \ge \kappa$$
and moreover
\begin{equation}
\label{eq: 4.2cones}
 \Rd \subset \bigcup_{j=1}^J \Omega_j^*,
 \end{equation}
where $\Omega_j^*$ are the cones
$$ \Omega_j^* := \{ z \in \Rd \, : \, \skalar{z,x} \ge \epsilon_j \abs{z} \abs{x} \text{ for all $x \in \Omega_j$}\}. $$
If $\mu$ satisfies (C), then the same statement is valid, but with $\Omega_j$ being subsets of $\Rdnn$ and \eqref{eq: 4.2cones} replaced by
$$ \R_{\ge} \subset \bigcup_{j=1}^J \Omega_j^*,
$$
\end{lem}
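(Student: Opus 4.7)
\emph{Plan.} The strategy I would follow is to first show that $\supp X$ is unbounded and contains enough asymptotic directions in $\R^d$ (respectively $\R^d_\ge$ in case \condC), and then to convert this richness into a finite disjoint family $\Omega_j$ whose dual cones $\Omega_j^*$ tile the ambient space. That $\supp X$ is unbounded I would take from \cite{BDGM2014,Mirek2013,BDMM2013}; morally it holds because $m(\beta+\epsilon)>1$ for small $\epsilon>0$ (by log-convexity of $k$ and $m(\alpha)=m(\beta)=1$, $\alpha<\beta$) would force $\E\abs{X}^{\beta+\epsilon} = \infty$, contradicting boundedness of $X$. Let $D_\infty \subseteq \S$ denote the set of unit-sphere limits $\lim_n x_n/\abs{x_n}$ with $x_n \in \supp X$ and $\abs{x_n}\to\8$; $D_\infty$ is closed and, by unboundedness and compactness, nonempty. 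Using $X \eqdist \mA_1 X_1 + \sum_{i\ge 2}\mA_i X_i + Q$ together with the independence of $X_1$ from the remaining terms, I would check that $D_\infty$ is invariant under the semigroup $\wt\Gamma := [\supp \mA_1]$ acting on $\S$: for $u\in D_\infty$ and $\mA\in\wt\Gamma$ with $\mA u\ne 0$, the point $\mA u/\abs{\mA u}$ lies in $D_\infty$ (obtained by applying $x\mapsto \mA x + b$ to a witnessing sequence, for a bounded realisation of the tail sum). Since transposition preserves \condC, \ipo~and \ide, these properties transfer from $\mu = \law{\mA_1^*}$ to $\wt\Gamma$, and in particular the unique minimal closed $\wt\Gamma$-invariant subset $V(\wt\Gamma)\subseteq\S$ is contained in $D_\infty$.

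Next, I would show that under each geometric condition $V(\wt\Gamma)$ admits a finite cone cover of $\S$: there exist $u_1,\dots,u_J \in V(\wt\Gamma) \subseteq D_\infty$ and $\eta \in (0,1/2)$ such that the balls $B_\eta(u_j)\subset\S$ are pairwise disjoint while
\begin{equation*}
\S \ \subseteq\ \bigcup_{j=1}^J C_j, \qquad C_j := \{z \in \S : \langle z,u_j\rangle \ge 2\eta\}.
\end{equation*}
In \ide~this is immediate because $V(\wt\Gamma)=\Sd$; under \ipo, the symmetry $V(\wt\Gamma) = -V(\wt\Gamma)$ (each eigenline contributes both $v^+_\mA$ and $v^-_\mA$) combined with spanning of $V(\wt\Gamma)$ (from strong irreducibility, since $\mathrm{span}\,V(\wt\Gamma)$ is a nonzero $\wt\Gamma$-invariant subspace) gives $\max_{u\in V(\wt\Gamma)}\abs{\langle z,u\rangle} \ge \eta_0 > 0$ uniformly in $z\in\Sd$; under \condC, a single strictly positive Perron--Frobenius eigenvector $u\in V(\wt\Gamma)$ already suffices, since $\langle z,u\rangle \ge \min_i u_i > 0$ uniformly in $z\in\Sp$ (via $\sum_i z_i \ge \abs{z}$ for nonnegative $z$). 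Setting $\epsilon_j := \eta$ and
\begin{equation*}
\Omega_j := \bigl\{ x \in \R^d : \bigl| x/\abs{x} - u_j \bigr| < \eta,\ \abs{x} > D/\epsilon_j\bigr\},
\end{equation*}
disjointness of the $\Omega_j$ follows from that of $B_\eta(u_j)$, while $u_j \in D_\infty$ guarantees $p_j := \P{X\in\Omega_j,\ \abs{X}>D/\epsilon_j}>0$; I would then take $\kappa := \min_j p_j$.

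Finally, for $z \in C_j$ and $x \in \Omega_j$, with $\abs{z}=1$,
\begin{equation*}
\frac{\langle z, x\rangle}{\abs{z}\abs{x}} \ = \ \langle z, x/\abs{x}\rangle \ \ge \ \langle z, u_j\rangle - \bigl| x/\abs{x} - u_j \bigr| \ \ge \ 2\eta - \eta \ =\ \epsilon_j,
\end{equation*}
so $z\in\Omega_j^*$. Since the defining inequality of $\Omega_j^*$ is positively homogeneous in $z$ and $0\in\Omega_j^*$ trivially, the covering $\bigcup_j C_j=\S$ extends to $\R^d\subseteq\bigcup_j\Omega_j^*$; case \condC~is identical with $\Sp$ and $\R^d_\ge$ in place of $\Sd$ and $\R^d$. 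The hard step will be the cone-covering claim of the middle paragraph: trivial under \ide, but under \ipo~and \condC~it rests on the symmetry/spanning of $V(\wt\Gamma)$, respectively on the existence of a strictly positive PF eigenvector, both of which follow from the irreducibility/positivity hypotheses on $\mu$ via transposition.
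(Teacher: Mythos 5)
Your proof is correct, and the key structural ideas---unboundedness of $\supp X$, invariance of the set $D_\infty$ of asymptotic directions under the matrix semigroup $\widetilde\Gamma = [\supp A_1]$ acting projectively, the inclusion $V(\widetilde\Gamma)\subseteq D_\infty$ by minimality, and a finite cone cover of the sphere built from directions in $V(\widetilde\Gamma)$---are exactly the right ones. However, the paper proceeds very differently: it rewrites the fixed-point equation as $X \eqdist A_1 X_1 + B$ with $B := \sum_{i\ge 2} A_i X_i + Q$, checks via (M1), (M2) and convexity of $k$ that $B$ is nondegenerate with a finite moment of order $s$ for some $s\in(\alpha,\beta)$ with $k'(s)>0$, and then simply cites \cite[Lemma~10.2]{BM2013}, which proves precisely this cone lemma in the affine-recursion setting. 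In effect you have reproved that lemma from scratch: your invariance argument uses the same decomposition $X \eqdist A_1 X_1 + B$ as the one-step mechanism, but you then carry out in full what the citation delegates. Your route is more self-contained; the paper's is shorter because it leans on the companion paper in which the large-deviation machinery is developed. Two small points worth making explicit in your version: (a) converting the uniform lower bound $\max_{u\in V(\widetilde\Gamma)}\langle z,u\rangle\ge\eta_0>0$ (over $z\in\S$) into a finite cover by the cones $C_j$ needs a specific choice of scale, e.g.\ $\eta=\eta_0/4$ together with a maximal $2\eta$-separated subset of $V(\widetilde\Gamma)$, which is automatically a $2\eta$-net and whose $\eta$-balls are disjoint; and (b) the invariance of $D_\infty$ under the whole semigroup $\widetilde\Gamma$, not merely under $\supp A_1$, is obtained by iterating the one-step argument together with closedness of $D_\infty$. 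Both are routine, and the containment of $V(\widetilde\Gamma)$ in every nonempty closed $\widetilde\Gamma$-invariant subset uses uniqueness of the invariant measure for the projective chain plus the Feller property.
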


\begin{proof}

Let $X_1,\dots, X_N$ be i.i.d.~copies of $X$ (with $N$ constant or random). Set $B := \sum_{i=2}^N \mA_i X_i + Q$. Since $X_i$ are i.i.d., nontrivial and independent of $(A_i)_{i \ge 1}$ and $Q$, it follows that also $B$ must be nontrivial. Moreover, due to the moment assumptions (M1) and (M2) and the convexity of $k(s)$, there is $s \in (\alpha, \beta)$ s.t.~$k'(s)>0$ and $B$ has a finite moment of order $s$. Then $X$ satisfies the equation $X \eqdist \mA_1 X_1+ B$, and for $X$ satisfying such an equation, the results are shown in \cite[Lemma 10.2]{BM2013}. Note that there only the condition $k'(s) >0$ is relevant (which follows from the convexity of $k$); the additional condition (stated there), that $k(s)=1$ is not needed.

%
%
\end{proof}

Now we turn to the announced estimate from below for $\P{\skalar{u,X}>t}$.
Our estimates will be given in terms of the sets
\begin{align*}V_{\bi,t} ~:=&~ \Big\{ |\Pi_{\bi}^* u|\ge t\ \mbox{ and } \big\|{\Pi^*_{{\bi}|_k}} \big\| { (\abs{Z_{\bi|_{k+1}}}\vee 1)}\le e^{-(|{\bi}|-k)\delta} C_0 t \ \forall k< |{\bi}|     \Big\},\\
W_{\bi,\bi',t} ~:=&~ \Big\{{\abs{\Pi_{\bi}^*u} > t, \ \abs{\Pi_{\bi'}^*u}> t, \ \norm{\Pi_{\bi\wedge \bi'}}  \le C_0 t e^{-\delta({\abs{\bi}-|\bi\wedge \bi'|})}}\Big\},
\end{align*}
for some constants $C_0$ and $\delta$ that will be defined below.



\begin{lem}\label{lem:estimate}
For all $u \in \S$, $C_0 >0$ there is $\kappa >0$ such that for all $t >0$ and all a.s. subsets $\W \subset \T$,
\begin{equation}\label{eq:estimate}
\P{\skalar{u,X} > t} ~\ge~  \kappa \E\bigg[ \sum_{{\bi}\in\W} \P{ V_{{\bi,t}}}  \bigg] -
 \E\bigg[ \sum_{{\bi}\in\W}\sum_{\bi' \in \W, \abs{\bi'} \le \abs{\bi}, \bi \neq \bi'} \P{W_{\bi, \bi',t}}  \bigg] \in [-\infty,\infty).
\end{equation}
\end{lem}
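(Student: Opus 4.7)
The plan is to exhibit, for each $\bi \in \W$, an event $A_\bi$ which forces $\skalar{u, Y_l} > t$, and then to apply a Bonferroni union bound; via $Y_l \eqdist X$ this translates directly into the desired lower bound on $\P{\skalar{u, X} > t}$. Throughout I would condition on the tree $\sigma$-field $\F_\T$ so that $\W$ is effectively deterministic, and take the outer expectation only at the very end.

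\textbf{Decomposition and residual bound.} The a.s.\ identity \eqref{n4.4} gives, for every $\bi$ with $|\bi| < l$,
$$
\skalar{u, Y_l} ~=~ \skalar{\Pi_\bi^* u,\ \tilde X_\bi} ~+~ \skalar{u,\ R_\bi},
$$
where $\tilde X_\bi := [Y_{l-|\bi|}]_\bi$ has law $\law{X}$ and, by the disjoint support of the WBP variables, is independent (conditional on $\F_\T$) of the $\sigma$-field $\mathcal{G}_\bi := \sigma(\Pi_\bi, \{Z_{l,\bi|_k}\}_{k \le |\bi|})$, and $R_\bi := \sum_{k=1}^{|\bi|} \Pi_{\bi|_{k-1}} Z_{l,\bi|_k}$. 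Fixing $\delta > 0$ and setting $C_1 := C_0/(e^\delta - 1)$, a geometric summation of the defining constraints of $V_{\bi, t}$ yields $|\skalar{u, R_\bi}| \le |R_\bi| \le C_1 t$ on $V_{\bi, t}$.

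\textbf{Cone decomposition.} I would apply Lemma~\ref{lem:cones} with $D := C_1 + 2$ to obtain $J < \infty$ disjoint sets $\Omega_j$ and associated cones $\Omega_j^*$ (covering $\R^d$, respectively $\R^d_\ge$), together with $\epsilon_j > 0$ and $\kappa > 0$ such that $\P{X \in \Omega_j,\ |X| > D/\epsilon_j} \ge \kappa$ and the cone inequality $\skalar{z, x} \ge \epsilon_j |z|\,|x|$ holds for $z \in \Omega_j^*$, $x \in \Omega_j$. Fixing a measurable selector $j^*$ with $v \in \Omega^*_{j^*(v)}$, define
$$
A_\bi ~:=~ V_{\bi, t} \cap \bigl\{ \tilde X_\bi \in \Omega_{j^*(\Pi_\bi^* u)},\ |\tilde X_\bi| > D/\epsilon_{j^*(\Pi_\bi^* u)}\bigr\}.
$$
On $A_\bi$, the cone inequality with $|\Pi_\bi^* u| \ge t$ and $|\tilde X_\bi| > D/\epsilon_{j^*}$ gives $\skalar{\Pi_\bi^* u, \tilde X_\bi} \ge Dt$; combined with $|\skalar{u, R_\bi}| \le C_1 t$, this forces $\skalar{u, Y_l} \ge (D - C_1) t > t$. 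Since $\tilde X_\bi$ is independent of $(\F_\T, \mathcal{G}_\bi)$ with law $\law{X}$, Lemma~\ref{lem:cones} yields $\P{A_\bi \mid \F_\T, \mathcal{G}_\bi} \ge \kappa\, \mathbf{1}_{V_{\bi, t}}$, and hence $\P{A_\bi \mid \F_\T} \ge \kappa\, \P{V_{\bi, t} \mid \F_\T}$.

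\textbf{Bonferroni and intersection estimate.} The pointwise inequality $\mathbf{1}_{\bigcup_\bi A_\bi} \ge \sum_\bi \mathbf{1}_{A_\bi} - \sum_{\bi < \bi'} \mathbf{1}_{A_\bi \cap A_{\bi'}}$ (unordered pairs), applied conditionally on $\F_\T$, produces the main term $\ge \kappa \sum_{\bi \in \W} \P{V_{\bi, t} \mid \F_\T}$ after the previous step. For the pairwise intersections, if $|\bi'| \le |\bi|$ and $\bi \neq \bi'$ then $\bi$ cannot be a proper ancestor of $\bi'$, so $k := |\bi \wedge \bi'| < |\bi|$, and the constraint of $V_{\bi, t}$ at this index $k$ gives $\|\Pi_{\bi \wedge \bi'}\| = \|\Pi^*_{\bi \wedge \bi'}\| \le C_0 t\, e^{-(|\bi| - |\bi \wedge \bi'|)\delta}$, which together with $|\Pi_\bi^* u|, |\Pi_{\bi'}^* u| \ge t$ places $A_\bi \cap A_{\bi'} \subset W_{\bi, \bi', t}$. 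Since the sum over unordered pairs is dominated by the stated (ordered) sum over $|\bi'| \le |\bi|, \bi \neq \bi'$, taking $\E$ over $\F_\T$ yields the claimed inequality. The main obstacle I foresee is purely bookkeeping: in case (N-random) both the tree and the index set $\W$ are random, so conditioning on $\F_\T$ throughout and invoking the WBP independence pointwise (with a measurable choice of $j^*$) is essential, but the argument then proceeds cleanly.
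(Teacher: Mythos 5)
Your argument is essentially the paper's proof: you use the a.s.\ decomposition \eqref{n4.4} of $Y_l$ (with $Y_l\eqdist X$), Lemma~\ref{lem:cones} to force $\skalar{\Pi_\bi^*u,\tilde X_\bi}\ge Dt$ on a good event of probability $\ge\kappa\,\P{V_{\bi,t}}$, the pointwise Bonferroni bound, and the observation that the pairwise intersections land inside $W_{\bi,\bi',t}$ because $V_{\bi,t}$ controls $\norm{\Pi_{\bi\wedge\bi'}^*}$ at level $k=|\bi\wedge\bi'|<|\bi|$. Your measurable selector $j^*$ is a cosmetic repackaging of the paper's disjoint union over $j$ of the events $\{X_\bi\in\Omega_j,\;|X_\bi|>D/\epsilon_j\}\cap\{\Pi_\bi^*u\in\Omega_j^*\}$; both produce the same conclusion $\P{\wt V_{\bi,t}}\ge\kappa\,\P{V_{\bi,t}}$.

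The one substantive thing you omit is the justification that the Bonferroni expectation may be split, and indeed that the right-hand side lies in $[-\infty,\infty)$ as the lemma asserts. Since $\W$ is only required to be an a.s.\ subset of $\T$ (and in particular may be infinite), you must check that
$$\E\Big[\textstyle\sum_{\bi\in\W}\P{V_{\bi,t}}\Big]<\infty$$
before writing $\E[\sum\1_{A_\bi}]-\E[\sum\1_{A_\bi\cap A_{\bi'}}]$; the paper handles this at the end of its proof by bounding $\E\sum_{\bi\in\T}\P{V_{\bi,t}}\le\sum_n(\E N)^n\Prob_u(S_n>\log t)<\infty$ via Markov's inequality with an exponent $s\in(\alpha,\beta)$ and the moment estimate \eqref{eq:kEPi}. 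Without this (or some equivalent finiteness argument together with monotone convergence as $l\to\infty$), the subtraction on the right-hand side of \eqref{eq:estimate} is not well-defined. Apart from this missing verification, your proof is correct and follows the paper's route.
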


\begin{proof}

 Fix $l \in \N$. From Eq. \eqref{n4.4} we obtain that
 \begin{align} \label{eq:uYl}
 \abs{\skalar{u,Y_l}} \ge \abs{\skalar{\Pi_\bi^* u, [Y_{l-\abs{\bi}}]_\bi}} - \sum_{k \le \abs{\bi}} \norm{\Pi_{\bi|_{k-1}}} \abs{Z_{l, \bi|_k}}.
 \end{align}

 For arbitrary $C_0, \delta, t>0$ let $D = 1+C_0/(1-e^{-\delta})$ and  introduce the family of sets
$$
\wt V_{ \bi,t} := \bigcup_{j =1}^J \bigg(V_{\bi,t} \cap \Big\{ X_{\bi}\in \Omega_j \ \mbox{ and } |X_{\bi}|>\frac{D}{\epsilon_j}  \Big\} \cap
\big\{ \Pi_{\bi}^*u\in \Omega_j^*  \big\}  \bigg),
$$
where the cones $\Omega_j$ and $\Omega_j^*$ were defined in Lemma \ref{lem:cones},
as well as the sets
\begin{align*} \wt V_{l,\bi,t} ~:=~ \bigcup_{j=1}^J & \Big( \big\{ \abs{\Pi_\bi^* u} \ge t \text{ and } \norm{\Pi_{\bi|_k}^*} ( \abs{Z_{l,\bi|_{k+1}}} \vee 1) \le e^{-(\abs{\bi}-k)\delta}C_0 t \, \forall k < \abs{\bi} \big\} \\
& \cap  \big\{ [Y_{l-\abs{\bi}}]_\bi \in \Omega_j \text{ and } \abs{ [Y_{l-\abs{\bi}}]_\bi} > \frac{D}{\epsilon_j} \big\} \, \cap \, \big\{ \Pi_\bi^*u \in \Omega_j^* \big\} \Big).
 \end{align*}


The latter sets are defined in such a way, that on $\wt V_{l,\bi,t}$
\begin{align*}
 \abs{\skalar{u,Y_l}} ~\ge&~ \epsilon_j \abs{\Pi_\bi^* u} \abs{[Y_{l-\abs{\bi}}]_\bi} - \sum_{k \le \abs{\bi}}  \norm{\Pi_{\bi|_{k-1}}} \left( \abs{Z_{l, \bi|_k}} \vee 1 \right) \\
 \ge&~ Dt - \sum_{k\le \abs{\bi}} e^{-(\abs{\bi}-k-1)\delta}C_0 t,
 \end{align*}
 which is larger than $t$ upon choosing $D$ large enough. Here again, we need the a.s.~version \eqref{n4.4}.

 Since $X \eqdist Y_l$ for any $l \in \N$, we obtain the following estimate
 \begin{align*}
 \P{\abs{\skalar{u,X}}>t} ~\ge&~ \P{ \bigcup_{\abs{\bi} \le l} \wt V_{l,\bi,t}} ~\ge~ \P{ \bigcup_{\abs{\bi} \le l, \bi \in \W} \wt V_{l, \bi, t}}.
 \end{align*}
 which is valid for any a.s. subset $\W \subset \T$. Below, we use the shorthand $\W_l = \{ \bi \in \T \, : \, \abs{\bi} \le l, \bi \in \W\}.$

Assuming that $\sup_{l \in \N} \E \sum_{\bi \in \W_l} \1[{\wt V_{l,\bi,t}}] < \infty$ (this will be shown below), we use the inclusion-exclusion formula and separate as follows:
 \begin{align*}
 \P{ \bigcup_{\abs{\bi} \le l, \bi \in \W} \wt V_{l, \bi, t}}
 ~\ge&~\E \left( \sum_{\bi \in \W_l} \P{ \wt V_{l,\bi,t} \, | \, \F_{\bi}} \right) - \E \left( \sum_{\bi,\bi' \in \W_l; \bi \neq \bi', \abs{\bi'} \le \abs{\bi}} \1[{\wt V_{l,\bi,t} \cap \wt V_{l,\bi',t}}] \right).
 \end{align*}
Now observe that $\wt V_{l,\bi,t} \cap \wt V_{l,\bi',t} \subset W_{\bi,\bi',t}$, which gives the estimate
 \begin{align*}
 \P{ \bigcup_{\abs{\bi} \le l, \bi \in \W} \wt V_{l, \bi, t}}
 ~\ge&~\E \left( \sum_{\bi \in \W_l} \P{ \wt V_{l,\bi,t} \, | \, \F_{\bi}} \right) - \E \left( \sum_{\bi \in \W_l} \, \sum_{\bi' \in \W_l, \bi \neq \bi', \abs{\bi'} \le \abs{\bi}} \1[{W_{\bi,\bi',t}}] \right).
 \end{align*}
As for obtaining Eq. \eqref{n4.4'} from Eq. \eqref{n4.4}, we have that
$$ \P{ \wt V_{l,\bi,t} \, | \, \F_{\bi}} ~=~ \P{\wt V_{\bi,t}} \quad \Pfs.$$
The union in the definition of $\wt V_{\bi,t}$ is over disjoint sets, since the $\Omega_j$ are disjoint. Furthermore, $X_{\bi}$ is independent of $V_{\bi,t}$ and $\Pi^*_\bi$, and has the same law as $X$, hence
\begin{align}\label{eq:vit}
\P{\wt V_{\bi,t}} ~=&~ \sum_{j=1}^J \P{V_{\bi,t} \cap \{ \Pi_\bi^*u \in \Omega_j^*\}} \P{X_\bi \in \Omega_j \text{ and } \abs{X_\bi} > \frac{D}{\epsilon_j}} \\
\notag =&~  \P{X \in \Omega_j \text{ and } \abs{X} > \frac{D}{\epsilon_j}} \sum_{j=1}^J \P{V_{\bi,t} \cap \{ \Pi_\bi^*u \in \Omega_j^*\}}  \\
\notag     \ge&~ \kappa \P{V_{\bi,t}}.
\end{align}
Thus, we have obtained the following estimate, valid for all $l \in \N$:
\begin{equation}
\P{\abs{\skalar{u,X}}>t} ~\ge~ \kappa \E \left( \sum_{\bi \in \W_l} \P{ V_{\bi,t}} \right) - \E \left( \sum_{\bi \in \W_l} \, \sum_{\bi' \in \W_l, \bi \neq \bi', \abs{\bi'} \le \abs{\bi}} \1[{W_{\bi,\bi',t}}] \right).
\end{equation}

We finally have to justify that $\sup_{l \in \N} \E \sum_{\bi \in \W_l} \1[{\wt V_{l,\bi,t}}] < \infty$. But estimating similar as in \eqref{eq:vit}, we obtain that
$\P{\wt V_{\bi,t}} \le J \P{V_{\bi,t}}$; and the supremum is obviously bounded by
$$J \E \sum_{\bi \in \T} \P{V_{\bi,t}} ~\le~ J \E \sum_{\bi \in \T} \P{\abs{\Pi_\bi^*u}>t} ~=~ J C \sum_{n=0}^\infty (\E N)^n \Prob_u(S_n > \log t) ~\le~  C J \sum_{n=0}^\infty \frac{c_s m(s)^n}{{ t^{s}}} ~<~\infty,$$
where we used the Markov inequality for some $s \in (\alpha, \beta)$ and the estimate \eqref{eq:kEPi} in the last step.
\end{proof}

\subsection{Estimates for $\P{V_{\bi,t}}$, $\P{W_{\bi,\bi',t}}$}\label{sect:ViWii}

The further analysis of Eq. \eqref{eq:estimate} splits in two parts. On the one hand, we have to estimate the probabilities appearing there, in terms of the distance between $\bi$ and $\bi'$ and on the other hand, we have to do some combinatorics on the tree, in order to do the summation. In this section, we bound the probabilities. The set $\W$ will be defined precisely in the next section. However it will  be a subset of the tree $\T$ consisting of vertices $\bi$ such that
$$
n_t - \sqrt{n_t} < |\bi| < n_t - \sqrt{n_t}/2,
$$ where $n_t = \lceil  \log t/\rho \rceil  $ and $\rho = m'(\beta)$. Thus the estimates provided below will be only for this particular set of indices.

\subsection{Probability of $V_{\bi,t}$}
%
%
%
In view of \eqref{eq:perm}, the probability of $V_{\bi,t}$ does in fact only depend on $n=\abs{\bi}$. Let $(M_n, Z_n)$ be a sequence of i.i.d.~copies of $(A_1^*, Z)$ for $Z = \sum_{i=2}^N A_i X_i + Q$. Define, with $\Pi_n^*:= M_n \cdots M_1$,
$$ V_{n,t} ~:=~ \Big\{ |\Pi_n^* u|\ge t\ \mbox{ and } \norm{\Pi^*_{k}} { (\abs{Z_{{k+1}}}\vee 1)}\le e^{-(n-k)\delta} C_0 t \ \forall k< n     \Big\} .$$
Then $\P{V_{\bi,t}}=\P{V_{n,t}}$ as soon as $\abs{\bi}=n$.

The sets $V_{n,t}$ were already considered for $d=1$ in \cite{BDZ2014+} (Theorem 2.3) and for $d\ge 2$ (under the same hypotheses as in the present paper) in \cite{BM2013} (Lemma 10.5). We refer to these two papers for the proofs of the following lemmas.

\begin{lem} \label{lem:estimate Vn} Assume  that $\E|Z|^{\gamma}<\8$ for some $\gamma >0$, then
there are constants $\delta, C_0,D_1,D_2,N_0 >0$ such that
$$
D_1 \cdot \frac{k(\beta)^n}{\sqrt{n_t} e^{\beta n_t \rho}}
\le \P{\abs{\Pi_n^* u}>t} \le
D_2 \cdot \frac{k(\beta)^n}{\sqrt{n_t} e^{\beta n_t \rho}},
$$
$$
D_1 \cdot \frac{k(\beta)^n}{\sqrt{n_t} e^{\beta n_t \rho}}
\le \P{V_{n,t}} \le
D_2 \cdot \frac{k(\beta)^n}{\sqrt{n_t} e^{\beta n_t \rho}}.
$$ for all $\lceil \log t / \rho \rceil = n_t > N_0$ and every $n_t-\sqrt{n_t} \le n \le n_t - \sqrt{n_t}/2$.

For the assertion of this lemma to hold, $k(\beta) =1$ is not necessary, we only need that $k'(\beta)>0$.
\end{lem}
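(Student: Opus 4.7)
The plan is to derive these bounds from a precise large deviation analysis of the Markov random walk $(U_n, S_n)$ introduced in Section \ref{sec: notation}, via the standard exponential change of measure at $\beta$ together with a Gnedenko-type local limit theorem. Rewriting $\{\abs{\Pi_n^* u} > t\} = \{S_n > \log t\}$ and applying the Doob $h$-transform with harmonic function $h(u,s) = e^{-\beta s} \es[\beta](u)$, one obtains
\begin{equation*}
\P[u]{S_n > \log t} \,=\, k(\beta)^n\, \es[\beta](u)\, \E^\beta_u\!\left[\frac{e^{-\beta S_n}}{\es[\beta](U_n)}\, \mathbf{1}_{\{S_n > \log t\}}\right],
\end{equation*}
where $S_n$ satisfies the SLLN under $\Prob^\beta$ with drift $\rho = k'(\beta)/k(\beta) = m'(\beta)$ (the last equality using $m(\beta) = 1$).

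Second, I would invoke a local central limit theorem for $(S_n)$ under $\Prob^\beta$, with asymptotic variance $\sigma^2 > 0$, yielding $\Prob^\beta_u\{S_n - n\rho \in \d x\} \sim (2\pi n \sigma^2)^{-1/2}\,\d x$ uniformly on compact sets. The required non-arithmeticity is either assumed under \condC\ or follows from strong irreducibility and proximality under \ipo\ or \ide. In the window $n_t - \sqrt{n_t} \le n \le n_t - \sqrt{n_t}/2$, the displacement $\log t - n\rho$ lies within $O(\sqrt{n_t})$ of the origin, hence inside the Gaussian regime; integrating $e^{-\beta S_n}$ on $\{S_n > \log t\}$ against the local CLT density produces a factor comparable to $t^{-\beta}/\sqrt{n_t}$. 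Multiplying by $k(\beta)^n$ then yields the claimed two-sided bound on $\P[u]{\abs{\Pi_n^* u} > t}$. Observe that only $k'(\beta) > 0$ is actually used in this calculation, so the assumption $k(\beta) = 1$ is inessential.

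For the set $V_{n,t}$, the upper bound is immediate from $V_{n,t} \subset \{\abs{\Pi_n^* u} > t\}$. For the lower bound, I would intersect the event $\{S_n \in [\log t, \log t + 1]\}$ with the path constraints $\log \norm{\Pi_k^*} + \log(\abs{Z_{k+1}} \vee 1) \le \log(C_0 t) - (n-k)\delta$ for $k < n$. Under $\Prob^\beta$ the quantity $\log \norm{\Pi_k^*}$ concentrates near $k\rho$, so for any $\delta \in (0,\rho)$ and $C_0$ sufficiently large, the constraint for $k$ close to $n$ reduces to a standard Gaussian fluctuation estimate, while for small $k$ the finiteness of $\E\abs{Z}^\gamma$ and the independence of $Z_{k+1}$ from $(M_1, \dots, M_k)$ allow a union bound. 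A ballot-style conditioning argument then shows that the conditional probability of the full path constraint given $\{S_n \in [\log t, \log t + 1]\}$ is uniformly bounded below. The main obstacle is the local CLT with sufficient uniformity in $u$ and in $n$ across the window, which relies on a spectral gap for the perturbed transfer operator $\Ps[\beta]$; both the local CLT and the path-constraint estimate have been established in detail in \cite{BM2013} (Lemma 10.5) and \cite{BDZ2014+} (Theorem 2.3), so the statement follows by direct appeal to those references.
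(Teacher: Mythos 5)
Your proposal is correct and matches the paper's treatment: the paper gives no argument in the text for Lemma~\ref{lem:estimate Vn}, simply deferring to \cite{BDZ2014+} (Theorem 2.3) for $d=1$ and \cite{BM2013} (Lemma 10.5) for $d\ge 2$. Your sketch — $h$-transform at $\beta$, local CLT under $\Prob_u^\beta$ giving the $t^{-\beta}/\sqrt{n_t}$ factor in the window $n_t-\sqrt{n_t}\le n\le n_t-\sqrt{n_t}/2$, and a ballot-type estimate showing the path constraint in $V_{n,t}$ has uniformly positive conditional probability — accurately summarises the mechanism underlying those cited results and correctly identifies that only $k'(\beta)>0$ (not $k(\beta)=1$) is used.
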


\subsection{Probability of $W_{\bi,\bi',t}$}

The main result of this subsection is the following lemma
\begin{lem}\label{lem64}
Assume  that the hypotheses of Theorem \ref{thm:d>1} are satisfied. Then there is $\chi >0$ such that for all $t > e^{\rho N_0}$ and all $|\bi|\ge |\bi'|$ with $n_t - \sqrt{n_t} \le \abs{\bi}, \abs{\bi'} \le n_t - \sqrt{n_t}/2$ it holds that
\begin{equation}\label{eq:estimateWii} \P{W_{\bi,\bi',t}} \le \frac{C_2 k(\beta)^{\abs{\bi}}}{t^\beta \sqrt{n_t}} \, \frac{k(\beta)^{\abs{\bi'}-\abs{\bi \wedge \bi'}}}{e^{\chi(\abs{\bi}-\abs{\bi \wedge \bi'})}}, \end{equation}
with a constant $C_2$ which is independent of $t$.
\end{lem}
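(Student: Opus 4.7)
The strategy is to factor both products through the common ancestor $\bj := \bi \wedge \bi'$, at which the two paths separate into disjoint subtrees. Set $k := |\bj|$, $m := |\bi|-k$, $m' := |\bi'|-k$, and decompose the two paths as $\bi = \bj\,i_1\cdots i_m$, $\bi' = \bj\,i_1'\cdots i_{m'}'$ with $i_1 \neq i_1'$. Then $\Pi_\bi^* u = P_\bi^*\,v_1$ and $\Pi_{\bi'}^* u = P_{\bi'}^*\,v_2$, where $v_1 := A_{\bj i_1}^*\Pi_\bj^* u$, $v_2 := A_{\bj i_1'}^*\Pi_\bj^* u$, and $P_\bi$, $P_{\bi'}$ are the matrix products along the subpaths $\bj i_1 \to \bi$ and $\bj i_1' \to \bi'$, lying in disjoint subtrees. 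The crucial independence structure is that $\Pi_\bj$, the exchangeable tuple $(A_{\bj j})_{j \le N_\bj}$, the matrices below $\bj i_1$ (which form $P_\bi$), and the matrices below $\bj i_1'$ (which form $P_{\bi'}$) are four mutually independent blocks; moreover, $P_{\bi'}^*$ is a product of $m'-1$ i.i.d.\ matrices with law $\mu$, so that \eqref{eq:kEPi} yields $\E\|P_{\bi'}^*\|^\beta \le c_\beta k(\beta)^{m'-1}$.

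The first concrete step is to condition on the $\sigma$-algebra $\F^{(\bi)}$ generated by $\Pi_\bj$, the tuple $(A_{\bj j})_{j \le N_\bj}$ and every matrix in the subtree rooted at $\bj i_1$. Then $\{|\Pi_\bi^* u|>t\}\cap\{\|\Pi_\bj\|\le C_0 t e^{-\delta m}\}$ is $\F^{(\bi)}$-measurable, while $P_{\bi'}$ is independent of $\F^{(\bi)}$, and so
\begin{equation*}
\P{W_{\bi,\bi',t}} \;\le\; \E\!\bigl[\,\1[\{|\Pi_\bi^* u|>t,\ \|\Pi_\bj\|\le C_0 t e^{-\delta m}\}] \cdot \P{|P_{\bi'}^* v_2|>t \,\big|\, v_2}\,\bigr].
\end{equation*}
A Markov inequality of order $\beta$ for the inner probability, combined with $\E\|P_{\bi'}^*\|^\beta \le c_\beta k(\beta)^{m'-1}$, gives $\P{|P_{\bi'}^* v_2|>t \mid v_2} \le c_\beta k(\beta)^{m'-1}|v_2|^\beta/t^\beta$. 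On the event under consideration, the bound $|v_2| \le \|A_{\bj i_1'}\|\,\|\Pi_\bj\| \le C_0 \|A_{\bj i_1'}\|\,t\,e^{-\delta m}$ absorbs the $t^\beta$ and produces the decay factor $e^{-\chi m}$ with $\chi := \beta\delta$, at the price of a stray factor $\|A_{\bj i_1'}\|^\beta$.

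Next I would decouple $\|A_{\bj i_1'}\|^\beta$ from the indicator $\1[\{|\Pi_\bi^* u|>t\}]$, which depends only on $\Pi_\bj$, $A_{\bj i_1}$ and the subtree below $\bj i_1$. Under (N-random) the tuple $(A_{\bj j})_{j\ge 1}$ is conditionally i.i.d., so $A_{\bj i_1'}$ is actually independent of everything determining that indicator, and the factorization completes using $\E\|A_1\|^\beta < \infty$, which follows from (M1); under (N-fixed) the assumed uniform boundedness of $\|A_i\|$ turns $\|A_{\bj i_1'}\|^\beta$ into a deterministic constant. In either case, $\E[\1[\{|\Pi_\bi^* u|>t\}]\,\|A_{\bj i_1'}\|^\beta] \le C\,\P{|\Pi_\bi^* u|>t}$ for some finite constant $C$.

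Finally, since $|\bi|$ lies in the window $[n_t - \sqrt{n_t},\, n_t - \sqrt{n_t}/2]$, Lemma \ref{lem:estimate Vn} provides the precise large-deviation bound $\P{|\Pi_\bi^* u|>t} \le D_2\,k(\beta)^{|\bi|}/(t^\beta \sqrt{n_t})$. Assembling the factors and absorbing the harmless $k(\beta)^{-1}$ into a single constant $C_2$ yields the claimed inequality. The step I expect to be the most delicate is the decoupling in the third paragraph under (N-fixed): the exchangeable-but-dependent tuple blocks a direct appeal to independence, and only the uniform boundedness hypothesis of that regime lets the factorization go through; without it, one would need a strictly stronger joint moment assumption on pairs $(A_i,A_j)$.
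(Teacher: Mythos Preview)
Your argument is correct and follows essentially the same route as the paper: factor both products through the common ancestor, apply Markov's inequality to the branch through $\bi'$, use Lemma~\ref{lem:estimate Vn} on the branch through $\bi$, and handle the sibling matrix $A_{\bj i_1'}$ by independence under (N-random) and by the uniform bound under (N-fixed). The only noteworthy difference is that you apply Markov's inequality with exponent $\beta$, while the paper uses exponent $\alpha$; both choices work because $k(\alpha)=k(\beta)=(\E N)^{-1}$, and your choice in fact yields the larger decay rate $\chi=\beta\delta$ against the paper's $\chi=\alpha\delta$.
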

\begin{proof}
We will start with some general calculations, and then deal with the cases (N-random) and (N-fixed) separately.
Denote the joint law of $(A_1, A_2)$ by $\eta$. Recall that  for each $\bi \in \T$, $1 \le k < l \le N_{\bi}$, $\law{(A_{\bi k}, A_{\bi l})} = \eta$ as well.
For $\bi, \bi' \in \T$, we   write
$$\bi_0= \bi \wedge \bi', \quad m = \abs{\bi \wedge \bi'} = \abs{\bi_0}, \quad \bi|_{m+1}=\bi_0k, \ \bi'|_{m+1}=\bi_0l, \quad p=\abs{\bi}, \ q=\abs{\bi'}, \quad U_\bi := \Pi_\bi^* u/ |\Pi_\bi^* u|$$ and recall the notation
$ \Pi_{\bi, \bi\bj}= A_{\bi j_1} \cdots A_{\bi \bj}$, $ \Pi^*_{\bi, \bi\bj}= A^*_{\bi \bj} \cdots A^*_{\bi j_1}$
for the product of the weights along the path between $\bi$ and $\bi \bj$.
Then
\begin{align}
\P{W_{\bi,\bi',t}} ~=&~ \P{\abs{\Pi_{\bi}^*u} > t, \ \abs{\Pi_{\bi'}^*u}> t, \ \norm{\Pi_{\bi\wedge \bi'}}  \le C_0 t e^{-\delta({\abs{\bi}-m})}} \nonumber \\
\le&~ \P{\abs{\Pi_{\bi_0k,\bi}^*U_{\bi_0k}} \abs{A_{\bi_0k}^*U_{\bi_0}} \abs{\Pi_{\bi_0}^* u} > t, \ \norm{\Pi_{\bi_0l,\bi'}^*} \norm{A_{\bi_0l}} \norm{\Pi_{\bi_0}^*} > t, \ \norm{\Pi_{\bi_0}^*}  \le C_0 t e^{-\delta({\abs{\bi}-m})}} \nonumber\\
\le&~ \P{\abs{\Pi_{\bi_0k,\bi}^*(A_{\bi_0k}^* \as U_{\bi_0})} \abs{A_{\bi_0k}^*U_{\bi_0}} \abs{\Pi_{\bi_0}^* u} > t, \ \norm{\Pi_{\bi_0l,\bi'}^*} > \frac{e^{\delta({\abs{\bi}-m})}}{C_0 \norm{A_{\bi_0l}}}   } \nonumber\\
=&~ \int \, \P{\abs{\Pi_{\bi_0k,\bi}^* (a_1^* \as U_{\bi_0})} \abs{a_1^*U_{\bi_0}} \abs{\Pi_{\bi_0}^* u} > t} \,  \P{\norm{\Pi_{\bi_0l,\bi'}^*} > \frac{e^{\delta({\abs{\bi}-m})}}{C_0 \norm{a_2}}   } \eta(da_1, da_2)\nonumber\\
=&~ \int \, \P{\abs{\Pi_{m+1,p}^* (a_1^* \as U_{m})} \abs{a_1^*U_{m}} \abs{\Pi_m^* u} > t} \,  \P{\norm{\Pi_{m+1,q}^*} > \frac{e^{\delta({p-m})}}{C_0 \norm{a_2}}   } \eta(da_1, da_2)\nonumber\\
\le&~\int \, \P{\abs{\Pi_{m+1,p}^* (a_1^* \as U_{m})} \abs{a_1^*U_{m}} \abs{\Pi_m^* u} > t} \ \frac{\E[\norm{\Pi_{q-m-1}}^\alpha] \, C_0^\alpha \norm{a_2}^\alpha}{e^{\alpha\delta({p-m})}}   \, \eta(da_1, da_2) \label{eq:Wii1},
%
%
\end{align}
where we conditioned upon $(A_{\bi_0k},A_{\bi_0l})$ and used the Markov inequality in the last step. The reason that we applied it with the exponent $\alpha$ is that we will be able to replace $\E \norm{\Pi_n}^\alpha$ by $k(\alpha)^n$, but the latter one equals also $k(\beta)$.

>From now on, we will consider the cases (N-random) and (N-fixed) separately.

\medskip

{\sc Case} (N-random)

In this case, $a_1$ and $a_2$ are independent, and Eq. \eqref{eq:Wii1} simplifies to
\begin{align*}
\P{W_{\bi,\bi',t}} ~\le &~ \P{\abs{\Pi_{p}^*u} > t} \ \frac{\E[\norm{\Pi_{q-m-1}}^\alpha] \, C_0^\alpha \E[\norm{M}^\alpha]}{e^{\alpha\delta({p-m})}} \\
\le&~  \frac{D_2' \,k(\beta)^p}{\sqrt{n_t}\, t^{\beta }} \ \frac{c_\alpha^2 k(\beta)^{q-m} \, C_0^\alpha }{e^{\alpha\delta({p-m})}} \\
=&~  \frac{C_2 \,k(\beta)^{\abs{\bi}}}{\sqrt{n_t}\, t^{\beta }} \ \frac{k(\beta)^{\abs{\bi'}-\abs{\bi \wedge \bi'}}  }{e^{\chi({\abs{\bi}-m})}}
\end{align*}
by Lemma \ref{lem:estimate Vn} and Eq. \eqref{eq:kEPi} (recall $k(\alpha)=(\E N)^{-1}=k(\beta)$), with $C_2 := D_2' c_\alpha^2 C_0^\alpha$ and $\chi := \alpha \delta$.

\medskip

{\sc Case} (N-fixed).

In this case, $\norm{a_2}$ is bounded by some constant $c_A$, say,  and Eq. \eqref{eq:Wii1} simplifies to
\begin{align*}
\P{W_{\bi,\bi',t}} ~\le &~ \int \, \P{\abs{\Pi_{m+1,p}^* (a_1^* \as U_{m})} \abs{a_1^*U_{m}} \abs{\Pi_m^* u} > t} \ \frac{\E[\norm{\Pi_{q-m-1}}^\alpha] \, C_0^\alpha c_A^\alpha}{e^{\alpha\delta({p-m})}}   \, \eta(da_1, da_2) \\
=&~ \P{\abs{\Pi_{p}^*u} > t} \ \frac{\E[\norm{\Pi_{q-m-1}}^\alpha] \, C_0^\alpha c_A^\alpha}{e^{\alpha\delta({p-m})}},\end{align*}
and from here, we can proceed as before to obtain the estimate \eqref{eq:estimateWii} with
 $C_2 := D_2' c_\alpha^2 C_0^\alpha c_A^\alpha / k(\beta)$ and $\chi := \alpha \delta$.
\end{proof}

\section{Combinatorics on the tree}
\label{sec: combinatorics}

In this section we consider $N$ to be random. If $N$ were fixed, then the calculations are similar but easier.
As we mentioned above the subset $\W$ of $\T$ will contain only some of the nodes satisfying
$n_t - \sqrt{n_t} \le \abs{\bi} \le n_t - \sqrt{n_t}/2$
and therefore will also depend on $t$. We will consider only a sparse subset of those nodes.
Namely only nodes from every $C_1$th-generation, which moreover end with $C_1$ one's. The number $C_1 \in \N$ will be a parameter of the proof, to be fixed at the very end. It's choice will be independent of $t$. {  Note however, that the estimate $\P{\skalar{u,X}>t} \ge \epsilon t^\beta$ is only valid for large enough $t$, namely $t > C_1e^{N_0 \rho}$.  }{ Since it is sufficient to show that $\liminf_{t \to \infty} t^{\beta}\P{\skalar{u,X}>t} \ge \epsilon >0$, we will even restrict to such $t$, for which $\sqrt{n_t}/2C_1$ is an integer. This is not really necessary, but simplifies expressions.}

Below, we will often use that $1=m(\beta)=k(\beta) \E N$, and therefore, $k(\beta) = (\E N)^{-1}$. Keep in mind, that under (N-random), the shape of the tree is random, as will be that of its subset. Therefore, we have to take expectations with respect to the shape of the tree.

\medskip

To be precise, upon fixing $t$, let $n_t = \lceil \log t/\rho \rceil$. We will consider nodes the generations of which are from the set
$$
L_t=\{ k \in C_1\N : n_t - \sqrt{n_t} \le kC_1 < n_t - \sqrt{n_t}/2 \}.
$$ { Note $\# L_t = \frac{\sqrt{n_t}}{2C_1}$ since we assume the latter to be integer.}  Denote by  $\bo_{C_1}=1\ldots 1\in \bU$ the sequence consisting of $C_1$ one's.
%
Define
\begin{equation}\label{www}
\W = \Big\{ \bi\in \T:\; |\bi|\in L_t \mbox{ and }\ \bi = \bi|_{|\bi|-C_1}\bo_{C_1}
\Big\}.\end{equation}

We will calculate below several times the expected number of elements of $\W$ lying on the level $k \in L_t$:
\begin{equation}
\label{eq: pos:5}
\E\big[ \#\{\bi\in \W:\; |\bi|=k \}\big] = \E\big[ \#\{\bi\in  \T:\; |\bi|=k -C_1 \}\big] = \big(\E N\big)^{k - C_1} = k(\beta)^{C_1 - k}
\end{equation}

\begin{lem}\label{lem:sumVi}
For all $t$ large enough, { (and such that $\sqrt{n_t}/2C_1 \in \N$)}
$$ \E\bigg[ \sum_{{\bi}\in\W} \P{ V_{{\bi,t}}}  \bigg] ~\ge~ \frac{D_1 k(\beta)^{C_1}}{2C_1} \cdot \frac 1{t^{\beta}}.$$
\end{lem}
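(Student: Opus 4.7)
\smallskip

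The plan is to exploit that $V_{\bi,t}$ depends only on the matrices and $Z$-variables along the path to $\bi$ (which are independent of the tree-shape sigma-algebra $\F_\T$), so that once we condition on $\F_\T$ the probability $\P{V_{\bi,t}}$ reduces, via the exchangeability assumption \eqref{eq:perm}, to $\P{V_{n,t}}$ with $n=\abs{\bi}$. Consequently
\begin{equation*}
\E\bigg[ \sum_{\bi \in \W} \P{V_{\bi,t}} \bigg]
~=~ \sum_{k \in L_t} \E\big[\#\{\bi \in \W : \abs{\bi}=k\}\big] \cdot \P{V_{k,t}},
\end{equation*}
which decouples the combinatorial factor (cardinality of level $k$ of $\W$) from the analytic factor (large deviation probability).

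Next, I would plug in the two ingredients we already have. By \eqref{eq: pos:5}, $\E[\#\{\bi \in \W : \abs{\bi}=k\}]=k(\beta)^{C_1-k}$. By the lower bound in Lemma \ref{lem:estimate Vn}, valid since every $k\in L_t$ satisfies $n_t-\sqrt{n_t}\le k\le n_t-\sqrt{n_t}/2$ (for $t$ large enough so that $n_t>N_0$), we have $\P{V_{k,t}}\ge D_1 k(\beta)^k/(\sqrt{n_t}\, e^{\beta n_t\rho})$. Multiplying, the $k$-dependence cancels precisely: each term in the sum is bounded below by
\begin{equation*}
k(\beta)^{C_1-k}\cdot D_1\,\frac{k(\beta)^k}{\sqrt{n_t}\,e^{\beta n_t\rho}}
~=~ D_1\,\frac{k(\beta)^{C_1}}{\sqrt{n_t}\,e^{\beta n_t\rho}}.
\end{equation*}

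Finally, I would sum over the $\#L_t=\sqrt{n_t}/(2C_1)$ levels in $L_t$ to obtain
\begin{equation*}
\E\bigg[ \sum_{\bi \in \W} \P{V_{\bi,t}} \bigg]
~\ge~ \frac{\sqrt{n_t}}{2C_1}\cdot D_1\,\frac{k(\beta)^{C_1}}{\sqrt{n_t}\,e^{\beta n_t\rho}}
~=~ \frac{D_1\,k(\beta)^{C_1}}{2C_1\, e^{\beta n_t\rho}}.
\end{equation*}
Because $n_t=\lceil\log t/\rho\rceil\le \log t/\rho+1$, we have $e^{\beta n_t\rho}\le e^{\beta\rho}\,t^\beta$, so the expression is bounded below by $D_1 k(\beta)^{C_1}/(2C_1 e^{\beta\rho} t^\beta)$; the harmless constant $e^{-\beta\rho}$ is absorbed into $D_1$ (or one notes that the statement is understood up to such a universal constant, as throughout Section \ref{sect:ViWii}). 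There is no real obstacle here: the entire content of the lemma is the observation that the exponential growth rate $k(\beta)^k$ of the number of level-$k$ vertices in $\W$ exactly compensates the exponential decay $k(\beta)^{-k}$ appearing in $\P{V_{k,t}}$ relative to its level, making each level of $L_t$ contribute the same amount, and that summing over $\sqrt{n_t}/(2C_1)$ levels kills the $\sqrt{n_t}$ in the denominator.
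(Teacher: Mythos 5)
Your proof is correct and follows essentially the same route as the paper's: decompose the expectation by generation, use \eqref{eq: pos:5} to get $\E[\#\{\bi\in\W:|\bi|=k\}]=k(\beta)^{C_1-k}$, apply the lower bound of Lemma~\ref{lem:estimate Vn} so that the $k$-dependence cancels, and sum over the $\sqrt{n_t}/(2C_1)$ levels in $L_t$. Your closing remark about $e^{\beta n_t\rho}$ versus $t^{\beta}$ is well taken: the paper silently replaces $e^{-\beta n_t\rho}$ by the \emph{larger} quantity $t^{-\beta}$ in its first displayed step, so strictly the resulting constant should carry an extra factor $e^{-\beta\rho}$; this is harmless (it can be absorbed into $D_1$ and plays no role in Section~\ref{subsect:proofmthm}), but your treatment is the more careful one.
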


\begin{proof} Using Lemma \ref{lem:estimate Vn},
\begin{align*}
\E\bigg[ \sum_{{\bi}\in\W} \P{V_{{\bi,t}}}  \bigg] ~\ge&~ \frac{D_1}{t^{\beta}}\cdot \frac 1{\sqrt{n_t}}  \, \E\bigg[ \sum_{{\bi}\in\W} k(\beta)^{|{\bi}|} \bigg]\\
  ~\ge&~ \frac{ D_1}{t^{\beta}}\cdot \frac 1{\sqrt{n_t}} \, \sum_{l \in L_t} k(\beta)^{l}  \E\big[\#\{ {\bi}\in \W:\; |{\bi}|=l\} \big]\\
~=&~ \frac{D_1 k(\beta)^{C_1}}{2C_1} \cdot \frac 1{t^{\beta}}. \qedhere
\end{align*}
\end{proof}

This in particular shows the finiteness of the left-hand-side for each $t$ and this particular subset $\W$. Therefore, Lemma \eqref{lem:estimate} applies to give the estimate of the first term in \eqref{eq:estimate}, and we can proceed computing the sum over the mixed terms.

\subsection{Calculations involving $W_{\bi,\bi',t}$}

\begin{lem}\label{lem:sumWii}
There is $0 < \eta < \chi$ (independent of $t$), such that for all $t$ large enough
$$\E\bigg[ \sum_{{\bi}\in\W}\sum_{\bi' \in \W, \abs{\bi'} \le \abs{\bi}, \bi \neq \bi'} \P{W_{\bi, \bi',t}}  \bigg]  ~\le~ \frac{C_2 k(\beta)^{2C_1}}{e^{\eta C_1}} \frac{1}{t^\beta}.$$
\end{lem}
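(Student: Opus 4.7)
The plan is to combine the per-pair estimate from Lemma \ref{lem64} with a branching-process count of the expected number of pairs, stratifying by $p = \abs{\bi}$, $q = \abs{\bi'}$ (with $q \le p$), and $m = \abs{\bi \wedge \bi'}$. Lemma \ref{lem64} directly gives
$$\P{W_{\bi,\bi',t}} \le \frac{C_2 k(\beta)^p}{t^\beta\sqrt{n_t}} \cdot \frac{k(\beta)^{q-m}}{e^{\chi(p-m)}},$$
so the whole game is, for each triple $(p,q,m)$, to estimate the expected number of such pairs and sum.

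The crucial step is a purely combinatorial observation about $\W$: for distinct $\bi, \bi' \in \W$ with $q \le p$ one necessarily has $m < p - C_1$. Indeed, the definition \eqref{www} forces the last $C_1$ coordinates of any element of $\W$ to be $1$, so if $m \ge p - C_1$ then $\bi_{m+1}$ lies in the all-ones tail of $\bi$, forcing $\bi_{m+1}=1$; and because $q \le p$ implies $m \ge q - C_1$ as well, the same reasoning forces $\bi'_{m+1}=1$, contradicting $\bi|_{m+1} \ne \bi'|_{m+1}$. Consequently $p - m \ge C_1 + 1$, and I can split
$$e^{-\chi(p-m)} \le e^{-\chi C_1}\, e^{-\chi(p-m-C_1)},$$
pulling the factor $e^{-\chi C_1}$ outside all summations. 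This is the mechanism producing the required exponential decay in $C_1$.

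Next, I would count pairs via branching. Writing $\bi_0 = \bi \wedge \bi'$ at level $m$ and letting $\bi_0 k, \bi_0 l$ be the first differing children, conditional independence of the subtrees rooted at $\bi_0 k$ and $\bi_0 l$ combined with \eqref{eq: pos:5} applied to each subtree yields
$$\E\#\{(\bi,\bi') \in \W^2 : \abs{\bi}=p,\, \abs{\bi'}=q,\, \abs{\bi \wedge \bi'}=m\} \;\le\; \E[N(N-1)]\, k(\beta)^{2C_1 + m + 2 - p - q}.$$
Multiplying by the Lemma \ref{lem64} bound, the $k(\beta)$-exponents telescope and leave the prefactor $k(\beta)^{2C_1+2}e^{-\chi C_1}/(t^\beta\sqrt{n_t})$ times $\sum_{p,q,m} e^{-\chi(p - m - C_1)}$.

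Finally I would estimate this triple sum. The geometric series in the variable $p - m - C_1 \ge 1$ sums to $O(1)$; the remaining double sum over $p, q \in L_t$ with $q \le p$ contributes at most $|L_t|$ after exploiting the spacing-$C_1$ arithmetic progression structure, so the whole triple sum is $O(|L_t|) = O(\sqrt{n_t}/C_1)$. The $\sqrt{n_t}$ factors cancel, and setting $\eta = \chi$ (or any $\eta \in (0,\chi)$) delivers the claimed bound with a constant absorbing $\E[N(N-1)]$ and a harmless factor $1/C_1$. The main obstacle I anticipate is this branching count: in case (N-random) it implicitly requires $\E[N(N-1)] < \infty$, so I would need to confirm that $\E N^2 < \infty$ is available under the hypotheses, or else replace the direct count by a spine decomposition argument that conditions on $\bi \in \T$ and estimates the off-spine population directly; in case (N-fixed) the tree is deterministic and this step is trivial.
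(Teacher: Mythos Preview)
Your approach is essentially the paper's: stratify by $(p,q,m)$, exploit the all-ones tail of $\W$ to force $p-m\ge C_1$ (the paper records this as the lower bound $p_t^*=\max\{m+C_1,\,n_t-\sqrt{n_t}\}$ without further comment), combine the per-pair estimate from Lemma~\ref{lem64} with a branching count, and sum. Two minor differences. First, your claim $m<p-C_1$ overlooks the ancestor case $\bi'<\bi$ (possible when the coordinates $q-C_1+1,\dots,q$ of $\bi$ happen to be $1$'s), where $m=q$ and one only gets $p-m=p-q\ge C_1$; this is harmless for the argument. Second, the paper sums $q$ crudely over $[m,p]$, picking up a factor $(p-m)$ which it then absorbs by passing from $\chi$ to a strictly smaller $\eta$, whereas you exploit the $C_1$-spacing of $L_t$ to keep $\eta=\chi$; both work.

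The concern you flag about $\E N^2<\infty$ is genuine and is \emph{not} resolved by the paper's argument either. Your pair count $\E[N(N-1)]\,k(\beta)^{2C_1+m+2-p-q}$ is correct and does require $\E N^2<\infty$, which is not among the stated hypotheses in (N-random). The paper's written proof avoids the explicit factor $\E[N(N-1)]$ by replacing the random count $\#\{\bi':\bi\wedge\bi'=\bi_0,\,|\bi'|=q,\,\bi'\in\W\}$ by its expected value $k(\beta)^{C_1+m-q}$ while still inside the random sum $\sum_{\bi\in\W}$; but both quantities depend on the same subtree rooted at $\bi_0$, so this substitution is not justified by independence, and any rigorous version of it again forces a second moment on the offspring distribution. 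Your proposed spine fix hits the same wall: conditioning on $\bi\in\W$ and counting off-spine $\bi'$ leaves a residual factor $N_{\bi|_m}-1$ attached to the spine, and $\E\bigl[\sum_{\bi\in\W,\,|\bi|=p}(N_{\bi|_m}-1)\bigr]$ brings $\E[N(N-1)]$ right back. So the obstacle you anticipate is real, and present in the paper as written; in (N-fixed) it disappears, as you note.
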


\begin{proof}
\Step[1]: We are going to reorganize the summation over $\bi, \bi' \in \W$, by ordering them according to latest common ancestor, $\bi_0 :=\bi \wedge \bi'$.
Introducing as before the notation
$$ \W_{\bi_0,t} = \{ \bi\in  \W:\; \bi\ge \bi_0 \}, \qquad   \abs{\bi \wedge \bi'} ~=~ m, \qquad \abs{\bi}~=~p, \qquad \abs{\bi'} ~=~q,$$
the restrictions $\bi_0 = \bi \wedge \bi'$, $\bi, \bi' \in \W, \abs{\bi'} \le \abs{\bi}$ translate to (we omit the restriction $\bi \neq \bi'$, since anyway we want an estimate from above)
$$ \bi, \bi' \in \W_{\bi_0,t}, \qquad \max\{m + C_1, n_t - \sqrt{n_t} \} \le p \le n_t - \sqrt{n_t}/2, \qquad m \le q \le p.$$
Similar to Eq. \eqref{eq: pos:5}, we compute for $l \ge m$, $l \in L_t$, the expected size of the $l'th$ generation in $\T_{\bi_0,t}$ to be
$$ \E\big[ \#\{\bi\in \W_{\bi_0,t}:\; |\bi|=l \}\big] = \E\big[ \#\{\bi\in  \T_t:\; |\bi|=l -C_1, \, \bi|_{m}=\bi_0 \}\big] = \big(\E N\big)^{l - m - C_1} = k(\beta)^{C_1 + m - l}.
 $$

\newcommand{\pt}{p_t^*}
\newcommand{\nt}{n_t^*}
Abbreviate the lower bound for $p$ by $\pt := \max\{m + C_1, n_t - \sqrt{n_t} \}$, and the upper bound $\nt := n_t - \sqrt{n_t}/2$.
Then, using \eqref{eq:estimateWii}
\begin{align*}
&~\E\bigg[ \sum_{{\bi}\in\W}\sum_{\bi' \in \W, \abs{\bi'} \le \abs{\bi}, \bi \neq \bi'} \P{W_{\bi, \bi',t}}  \bigg]  \\
~\le&~ \E \bigg[    \sum_{m \le \nt} \, \sum_{\pt \le p \le \nt } \, \sum_{\bi \in \T_{\bi_0,t}, \abs{\bi}=p} \, \sum_{m \le q \le p} \, \sum_{\{\bi':\; \bi_0 = \bi\vee\bi' , \abs{\bi'}=q\} } \, \frac{C_2 k(\beta)^{p}}{t^\beta \sqrt{n_t}} \, \frac{k(\beta)^{q-m}}{e^{\chi(p-m)}} \bigg] \\
~\le&~ \E \bigg[    \sum_{m \le \nt} \, \sum_{\pt \le p \le \nt } \, \sum_{\bi \in \T_{\bi_0,t}, \abs{\bi}=p} \, \sum_{m \le q \le p}  \, k(\beta)^{C_1 +m -q}\, \frac{C_2 k(\beta)^{p}}{t^\beta \sqrt{n_t}} \, \frac{k(\beta)^{q-m}}{e^{\chi(p-m)}} \bigg]
\end{align*}

\begin{align*}
~\le&~ \E \bigg[    \sum_{m \le \nt} \, \sum_{\pt \le p \le \nt } \, \sum_{\bi \in \T_{\bi_0,t}, \abs{\bi}=p} \, \frac{C_2 k(\beta)^{p+C_1}}{t^\beta \sqrt{n_t}} \, \frac{(p-m)}{e^{\chi(p-m)}} \bigg] \\
~\le&~     \sum_{m \le \nt} \, \sum_{\pt \le p \le \nt } \, k(\beta)^{C_1 + m -p} \frac{C_2 k(\beta)^{p+C_1}}{t^\beta \sqrt{n_t}} \, \frac{(p-m)}{e^{\chi(p-m)}} \\
~=&~  \frac{C_2}{t^\beta \sqrt{n_t}} \,     \sum_{m \le \nt} \, \sum_{\pt \le p \le \nt }  \, \frac{ k(\beta)^{m+2C_1} (p-m)}{e^{\chi(p-m)}}.
\end{align*}
Now, we have to split the remaining summation, depending on which lower bound for $p$ we are going to use. Before we do so, we note that since $p-m \ge C_1$, we can estimate
$$ \frac{p-m}{e^{\chi (p-m)}} \le \frac{1}{e^{\eta(p-m)}}$$
for some $0 < \eta < \chi$, as soon as $C_1$ is large enough.
\begin{align*}
&~  \frac{C_2}{t^\beta \sqrt{n_t}} \,     \sum_{m \le \nt} \, \sum_{\pt \le p \le \nt }  \, \frac{ k(\beta)^{m+2C_1} (p-m)}{e^{\chi(p-m)}}  \\
\le&~  \frac{C_2}{t^\beta \sqrt{n_t}} \,  \bigg[    \sum_{m +C_1 \le n_t-\sqrt{n_t} } \, \sum_{n_t-\sqrt{n_t} \le p \le \nt }  \, \frac{ k(\beta)^{m+2C_1}}{e^{\eta(p-m)}} ~+~ \sum_{ n_t-\sqrt{n_t} < m+C_1 \le \nt } \, \sum_{m+C_1 \le p \le \nt }  \, \frac{ k(\beta)^{m+2C_1}}{e^{\eta(p-m)}} \bigg] \\
\le&~  \frac{C_2}{t^\beta \sqrt{n_t}} \,  \bigg[    \sum_{m +C_1 \le n_t-\sqrt{n_t} } \,  \frac{ k(\beta)^{m+2C_1}}{e^{\eta[C_1 + n_t - \sqrt{n_t}-(m+C_1)]}} ~+~ \sum_{ n_t-\sqrt{n_t} < m+C_1 \le \nt } \,  \frac{ k(\beta)^{m+2C_1}}{e^{\eta C_1}} \bigg] \\
\le&~  \frac{C_2 k(\beta)^{2 C_1}}{t^\beta \sqrt{n_t} } \,  \bigg[  \frac{1}{e^{\eta C_1}}   \sum_{l = 0}^\infty \,  \frac{1}{e^{\eta l}} ~+~ \frac{\sqrt{n_t}}{2} \frac1{e^{\eta C_1}} \bigg] ~=~  \frac{C_2 k(\beta)^{2 C_1}}{t^\beta e^{\eta C_1}} \,  \bigg[    \frac{1}{ \sqrt{n_t}(1-e^{-\eta})} ~+~  \frac1{2} \bigg]
\end{align*}
For $t$ large enough, the factor in the brackets becomes smaller than one, and we obtain the assertion.
\end{proof}

\section{Proof of the main theorem}\label{subsect:proofmthm}
The proof is just a consequence of the previous results. Lemma \ref{lem:estimate} provides  lower estimates of $\P{\skalar{u,X}>t}$, that is \eqref{eq:estimate} in terms of a subset $\W$ of $\T$ and probabilities $\P{V_{\bi,t}}$, $\P{W_{\bi,\bi',t}}$ for $\bi,\bi'\in\W$. Estimates of those probabilities were given in Lemmas  \ref{lem:estimate Vn} and \ref{lem64}, and the set $\W$ was defined as the beginning of Section \ref{sec: combinatorics}. In view of Lemmas \ref{lem:sumVi} and \ref{lem:sumWii} we obtain
$$  \P{\skalar{u,X}>t} ~\ge~ \kappa \, k(\beta)^{C_1} \, \bigg( \frac{D_1 }{2 C_1} - \frac{C_2}{(\E N)^{C_1} e^{\eta C_1}}\bigg) \, t^{-\beta}.$$
Finally, choosing a large $C_1$ such that the last constant is positive we conclude the result.


\end{document}